\numberwithin{table}{section}
\numberwithin{equation}{section}
\theoremstyle{plain}
\newtheorem{theorem}{Theorem}[section]
\newtheorem{prop}[theorem]{Proposition}
\theoremstyle{definition}
\newtheorem{definition}[theorem]{Definition}
\newtheorem{remark}[theorem]{Remark}
\newcommand{\negphantom}[1]{\settowidth{\dimen0}{#1}\hspace*{-\dimen0}}
\newcommand\letters{\textup{(\alph*)}}
\newcommand\CC{\mathbb{C}}
\newcommand\FF{\mathbb{F}}
\newcommand\Sym{\mathbb{S}}
\newcommand\Alt{\mathbb{A}}
\DeclareMathOperator\Aut{Aut}
\DeclareMathOperator\Out{Out}
\DeclareMathOperator\id{id}
\DeclareMathOperator\GL{GL}
\newcommand\triop{\vartriangleright}
\newcommand\OO{\mathcal{O}}
\newcommand\hC{\mathcal{C}}
\title{Twisted homogeneous racks over the alternating groups}
\author{Joseph Vulakh}
\address{Math, Science, and Technology Center, Paul Laurence Dunbar High School, Lexington, KY}
\email{joseph@vulakh.us}
\subjclass[2020]{Primary 16T05}
\date{}
\begin{document}
\maketitle

\begin{abstract}
    An important step towards the classification of
    finite-dimensional pointed Hopf algebras is
    the classification of finite-dimensional Nichols algebras
    arising from braided vector spaces of group type.
    This question is fundamentally linked with
    the structure of algebraic objects called racks.
    Of particular interest to this classification is
    the type D condition on racks,
    a sufficient condition for a rack
    to not be the source of a finite-dimensional Nichols algebra.
    In this paper, we study the type D condition in simple racks
    arising from the alternating groups.
    Expanding upon previous work in this direction,
    we make progress towards a general classification
    of twisted homogeneous racks of type D
    by proving that several families of twisted homogeneous racks
    arising from alternating groups
    are of type D.
\end{abstract}

\section{Introduction}

The classification of finite-dimensional pointed Hopf algebras,
that is, finite-dimensional Hopf algebras with
all simple left or right comodules one-dimensional,
has been the subject of a rich line of research.
In~\cite{milinski2000pointed},
several non-trivial examples of
finite-dimensional pointed Hopf algebras were constructed
from small symmetric and dihedral groups,
and related quadratic algebras $\mathcal{E}_n$
were proved to be Nichols algebras.
The connection between pointed Hopf algebras and Nichols algebras
was made clearer by the introduction of
a classification program for finite-dimensional pointed Hopf algebras
in~\cite{andruskiewitsch2002pointed}.
This program proposes first classifying braided vector spaces $V$
of group type such that the Nichols algebra $\mathfrak{B}(V)$
is finite dimensional,
and then extending this result from Nichols algebras
to pointed Hopf algebras
constructed from $\mathfrak{B}(V)$
using the lifting method described therein.

In this paper, we consider the classification of
pointed Hopf algebras using algebraic objects called racks,
which generalize the properties of a group's conjugation action.
Racks are important for this method because
every Yetter-Drinfeld module over a finite group
can be obtained from a finite rack
through a braiding defined by $2$-cocycles associated with the rack;
this connection was uncovered in~\cite{ANDRUSKIEWITSCH2003177},
and a brief overview is given in \Cref{sec:prelim}.
As a result, the first step of the program
proposed in~\cite{andruskiewitsch2002pointed}
can be studied using racks.

Of particular importance to the classification
of finite-dimensional pointed Hopf algebras are
racks of type D, which, as explained in \Cref{sec:prelim},
provide a useful way of determining that certain Nichols algebras
are infinite dimensional.
It is therefore natural to attempt to classify
simple racks of type D;
this effort was initiated in~\cite{andruskiewitsch2011finite},
where most conjugacy classes of the alternating groups
and symmetric groups were determined to be of type D.
The type D condition was later investigated
for twisted homogeneous racks over the alternating groups
in~\cite{andruskiewitsch2010twisted},
for conjugacy classes of groups of Lie type
in~\cites{andruskiewitsch2011nichols, ANDRUSKIEWITSCH201536},
for twisted conjugacy calsses of groups of Lie type
in~\cite{CARNOVALE},
for conjugacy classes of the sporadic simple groups
in~\cite{ANDRUSKIEWITSCH2011305},
and for twisted conjugacy classes over the sporadic simple groups
in~\cite{fantino2013twisted}.
See \Cref{sec:prior_work} for a brief summary
of prior work as it pertains to the results of this paper.

We study the type D condition specifically for
twisted homogeneous racks over the alternating groups,
which are an important type of simple rack.
Our results build on
\cite{andruskiewitsch2010twisted}*{Theorem~1.2},
which showed that all twisted homogeneous racks not listed in
\Cref{tab:tw_hom_a_n_type_d_solved}
or \Cref{tab:tw_hom_a_n_type_d_unknown}
are of type D.
In this paper, we resolve most cases left unsolved
in~\cite{andruskiewitsch2010twisted}.
Using the notation of~\cite{andruskiewitsch2010twisted},
also explained in \Cref{sec:prior_work},
our main result is as follows.

\begin{theorem}\label{thm:main}
    For $n \geq 5$, a permutation $\ell \in \Alt_n$,
    an integer $t$ greater than $1$,
    and an automorphism $\theta$ of $\Alt_n$
    given by conjugation by an element of $\Sym_n$,
    if the twisted homogeneous rack $\hC_\ell$
    of type $(\Alt_n, t, \theta)$
    is not of type D,
    then $\hC_\ell$ is described by one of the following:
    \begin{enumerate}
        \item $\theta = \id$ and $\ell = e$,
            with $\gcd(t, n!) = 1$.
        \item $\theta = \id$ and $\ell = e$,
            with $n = 5$ and $t = 2$.
        \item $\theta = \id$ and $\ell = e$,
            with $n = 6$ and $t = 2$.
        \item $\theta = \iota_{(1\ 2)}$ and
            $\ell$ is of cycle type $(1^{s_1},2^{s_2},\dots,n^{s_n})$
            with $s_1\leq 1$ and $s_2=0$.
        \item $\theta = \iota_{(1\ 2)}$ and
        $\ell$ is of cycle type $(1^{s_1},2^{s_2},4^{s_4})$
        with $s_1\leq 2$ or $s_2\geq 1$,
        and $t = 2$.
    \end{enumerate}
\end{theorem}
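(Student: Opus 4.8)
The plan is to begin from the reduction already available in the literature and then deal with the surviving families one at a time. By \cite{andruskiewitsch2010twisted}*{Theorem~1.2}, every twisted homogeneous rack over $\Alt_n$ other than the finitely many families recorded in \Cref{tab:tw_hom_a_n_type_d_solved} and \Cref{tab:tw_hom_a_n_type_d_unknown} is already known to be of type~D, so it suffices to examine those families. Since twisting $\theta$ by an inner automorphism of $\Alt_n$ produces an isomorphic rack (with a correspondingly twisted base point), we may assume up to this equivalence that $\theta = \id$ or $\theta = \iota_{(1\ 2)}$, in accordance with the hypothesis on $\theta$ and with the entries of the tables.

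Two general principles drive the argument. First, type~D passes to supracks: if a rack contains a subrack of type~D, then it is itself of type~D, so for each family it is enough either to exhibit a decomposable subrack $R \sqcup S$ together with $r \in R$, $s \in S$ such that $r \triop \bigl(s \triop (r \triop s)\bigr) \neq s$, or to recognize a known type~D rack inside $\hC_\ell$. Second, $\hC_\ell$ can be presented as a conjugacy class, with conjugation as the rack operation, in a finite group of the form $\widetilde{G} \cong \Alt_n^{\,t} \rtimes \mathbb{Z}/N$ whose distinguished cyclic factor acts by a $\theta$-twisted cyclic shift; this makes available the usual group-theoretic test for type~D: if $r, s \in \hC_\ell$ are not conjugate within $H = \langle r, s \rangle$ and satisfy $(rs)^2 \neq (sr)^2$, then the two $H$-conjugacy classes of $r$ and of $s$ form a decomposable subrack of $\hC_\ell$ witnessing type~D, the displayed inequality being just a reformulation of $(rs)^2 \neq (sr)^2$. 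Producing such a pair $r, s$ with essentially disjoint supports inside $\widetilde{G}$ is the engine of the whole proof, and when $\theta$ is inner (respectively $\theta = \iota_{(1\ 2)}$) it plugs directly into the explicit type~D results for conjugacy classes of $\Alt_n$ (respectively of $\Sym_n$) from \cite{andruskiewitsch2011finite}.

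For $\theta = \id$ the surviving families are few. When $\ell \neq e$, the type~D status of $\hC_\ell$ is governed, via the reduction lemmas of \cite{andruskiewitsch2010twisted}, by conjugacy classes of $\Alt_n$, and the classes that occur are of type~D by \cite{andruskiewitsch2011finite}, leaving only a short list of small explicit cases to dispatch by direct, computer-assisted computation. When $\ell = e$, the rack $\hC_e$ is the conjugation rack on the class of the shift element of $\widetilde{G}$; for $t = 2$ it is isomorphic to the core rack $g \triop h = g h^{-1} g$ of $\Alt_n$, which is of type~D as soon as $n \geq 7$ (one finds $r, s$ generating a suitable decomposable subrack with $rs^{-1}$ of order not dividing $4$), whereas for $n = 5$ and $n = 6$ the constructions fail and a direct inspection shows the rack is then not of type~D, so these are set aside as cases~(2) and~(3). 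The remaining family $\theta = \id$, $\ell = e$, $\gcd(t, n!) = 1$ is not resolved by these methods and is set aside as case~(1).

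The bulk of the work, and the main obstacle, is the case $\theta = \iota_{(1\ 2)}$. Here $\Alt_n \rtimes \langle \iota_{(1\ 2)} \rangle \cong \Sym_n$, and for $t \geq 2$ one argues inside the corresponding $\Sym_n$-wreath group. The strategy is to take $r$ of the prescribed cycle type and build $s$ from it by a conjugation localized on a small set of points chosen disjoint from a region on which $r$ can be pinned down, arranging simultaneously that $r$ and $s$ are non-conjugate in $\langle r, s \rangle$, that $(rs)^2 \neq (sr)^2$, and that the transposition $(1\ 2)$ does not spoil the required disjointness; this forces a case analysis organized by the cycle type $(1^{s_1}, 2^{s_2}, \dots, n^{s_n})$ of $\ell$ and by how the fixed points and the $2$-cycles of $\ell$ lie relative to $\{1, 2\}$. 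The construction succeeds whenever there is a spare fixed point or a spare $2$-cycle available to absorb the twist, and it degrades exactly at the boundary: it fails when $s_1 \leq 1$ and $s_2 = 0$, which is case~(4), and in the tight $t = 2$ family of cycle type $(1^{s_1}, 2^{s_2}, 4^{s_4})$ with $s_1 \leq 2$ or $s_2 \geq 1$, which is case~(5). The technical heart of the paper is verifying that the subrack construction really does go through in every case outside~(4) and~(5), and that these two families are sharp; the small values of $n$ in each regime are handled with computer assistance.
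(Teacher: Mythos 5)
Your overall frame coincides with the paper's: start from \cite{andruskiewitsch2010twisted}*{Theorem~1.2}, then settle the surviving families of \Cref{tab:tw_hom_a_n_type_d_solved} by exhibiting inside each $\hC_\ell$ a decomposable subrack $R\coprod S$ with $r\triop(s\triop(r\triop s))\neq s$, the rest being exactly \Cref{tab:tw_hom_a_n_type_d_unknown}, i.e.\ the list in the theorem. The problem is that your proposal never actually produces these subracks, and the shortcuts you invoke in their place do not work. For $\theta=\id$ and $\ell\neq e$, the families left open by the 2010 reduction are involutions of types $(1,2^2)$, $(1^2,2^2)$, $(2^4)$ and elements of type $(1^{r_1},2^{r_2},4^{r_4})$ with $t=2$; the corresponding conjugacy classes of $\Alt_n$ are precisely among those \emph{not} known to be of type D (they sit in list (b) of \cite{andruskiewitsch2011finite}*{Theorem~4.1}), so your appeal ``the classes that occur are of type D by \cite{andruskiewitsch2011finite}'' fails for exactly the cases at issue, and the $(1^{r_1},2^{r_2},4^{r_4})$, $t=2$ family is infinite in $n$, hence not a ``short list of small explicit cases'' for a computer. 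The paper handles these by hand with uniform constructions: tuples over a Klein four-group whose product is pinned to $x$ or $y$ (\Cref{prop:id_12r}), and a homomorphism onto $C_4$ built from explicit elements of types $(2,4)$ and $(4^2)$ (\Cref{prop:id_124}, \Cref{prop:id_14}).

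There are two further gaps. For $\ell=e$, $\theta=\id$, you only discuss $t=2$ (via an unjustified core-rack identification) and the coprime family, but the theorem also requires $\hC_e$ with even $t\geq 4$ --- in particular $(\Alt_5,4,\id)$, the first row of \Cref{tab:tw_hom_a_n_type_d_solved} --- to be of type D; your plan never addresses this, whereas the paper does it in \Cref{prop:id_1} with tuples of product $e$ over a copy of $\Sym_3$ inside $\Alt_5$, split by sign. For $\theta=\iota_{(1\ 2)}$ you offer only a generic ``localized conjugation'' recipe plus unspecified computer checks, with no verification that non-conjugacy in $\langle r,s\rangle$ and $(rs)^2\neq(sr)^2$ can actually be arranged in the specific surviving families ($\ell(1\ 2)$ of types $(1^k,2)$, $(2^3)$, $(1,2^3)$, $(1^2,2^3)$, $(2^5)$ with the listed $t$); the paper instead gives explicit subracks (\Cref{prop:iota_12}, \Cref{prop:iota_222}) and, for $r_2$ odd $>1$ and $t\geq 3$, a clean reduction to the identity-automorphism case by restricting to permutations fixing $1$ and $2$ (\Cref{prop:iota_12r}). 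As written, your text is a plausible strategy outline, but the technical content that the theorem actually needs --- the explicit decomposable subracks for each family in \Cref{tab:tw_hom_a_n_type_d_solved} --- is missing, and in the $\theta=\id$, $\ell\neq e$ branch the proposed substitute argument is incorrect.
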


\Cref{thm:main} proves that all
racks in \Cref{tab:tw_hom_a_n_type_d_solved}
are of type D,
leaving only the racks in
\Cref{tab:tw_hom_a_n_type_d_unknown}
not known to be of type D.
When $\theta = \id = \iota_e$, the only remaining cases
are twisted homogeneous racks $\hC_e$.
It was found in~\cite{andruskiewitsch2010twisted}
that the racks $\hC_e$ of type
$(\Alt_5, 2, \id)$ and $(\Alt_6, 2, \id)$
are not of type D,
and it is not known whether these racks collapse.
For the other twisted homogeneous racks with $\theta = \id$
listed in \Cref{tab:tw_hom_a_n_type_d_unknown},
it is still unknown whether they are of type D.
The cases remaining when $\theta = \iota_{(1\ 2)}$
are more broad, and include racks $\hC_\ell$
with $\ell (1\ 2)$ of arbitrarily large order.
In addition, the case when
$\theta$ is an exceptional automorphism of $\Alt_6$
has not yet been explored.

We discuss the technical background of our work
in \Cref{sec:prelim},
where we provide definitions of the algebraic structures
used in this paper.
We review prior work on
simple racks, and
twisted homogeneous racks over the alternating groups
in particular,
in \Cref{sec:prior_work}.
Finally, in \Cref{sec:tw_hom_racks},
we prove our main result, \Cref{thm:main}.

\section{Preliminaries}\label{sec:prelim}


We denote the group identity by $e$ throughout this paper.
In this section,
we summarize the technical preliminaries of our work.



We first describe basic notions related to racks.
See~\cite{ANDRUSKIEWITSCH2003177}
and~\cite{andruskiewitsch2011finite}
for more details.
A \emph{rack} is a pair $(X, \triop)$,
with $X$ a non-empty set and
$\triop \colon X \times X \rightarrow X$ a function,
such that:
\begin{enumerate}
    \item For all $x$ in $X$,
    the map $\varphi_x \colon X \rightarrow X$
    given by
    \begin{equation*}
        \varphi_x (y) = x \triop y
    \end{equation*}
    is a bijection.
    \item For all $x$, $y$, $z$ in $X$,
    \begin{equation*}
        x \triop (y \triop z) = (x \triop y) \triop (x \triop z).
    \end{equation*}
\end{enumerate}

Additional definitions concerning racks are listed below.
\begin{itemize}
    \item A \emph{morphism} of racks is a function
        $f \colon X \rightarrow Y$
        such that
        \begin{equation*}
            f(x \triop y) = f(x) \triop f(y)
        \end{equation*}
        for all $x$, $y$ in $X$.
    \item A \emph{subrack} of a rack $X$ is a non-empty subset $Y$
        such that $Y \triop Y = Y$.
    \item A \emph{decomposition} of a rack $X$ is a family
        ${(X_i)}_{i \in I}$ of pairwise disjoint subracks of $X$
        such that $X = \coprod_{i \in I} X_i$ and
        $X \triop X_i = X_i$ for all $i$ in $I$.
    \item A rack $X$ is \emph{decomposable} if
        there exists a decomposition of $X$.
    \item A rack is \emph{trivial} if it contains exactly one element.
\end{itemize}

An important construction of racks comes from group conjugation.
Let $G$ be a group,
and let $x \triop y = xyx^{-1}$ for $x$, $y$ in $G$.
Then $(G, \triop)$ is a rack,
and conjugacy classes of $G$ are subracks.

More generally, we consider twisted conjugacy classes
of the group $G$. Let $f \in \Aut(G)$,
and consider the action of $G$ on itself given by
$y \rightharpoonup_f x = yx f(y^{-1})$.
For $x$ in $G$,
the orbit $\OO_x^{G, f}$ of $x$ under this action
is called the \emph{twisted conjugacy class} of $x$
and is a rack with operation
$y \triop_f z = y f(zy^{-1})$.
We will refer to these racks as
\emph{twisted conjugacy classes of type $(G, f)$}.

Another important rack construction is that of an \emph{affine rack}.
Let $A$ be a finite abelian group,
and let $g \colon A \rightarrow A$ be an automorphism.
Define the affine rack $(A, g)$ as
the rack with underlying set $A$
and rack operation
$x \triop y = x + g(y - x)$.
Letting $f = \id - g$, this can be written as
$x \triop y = f(x) + g(y)$ or $x \triop y = f(x - y) + y$.


We now consider cocycles and Nichols algebras.
Let ${(X_i)}_{i \in I}$ be a decomposition of a rack $X$,
and let $\mathbf{n} = {(n_i)}_{i \in I}$ be
a family of positive integers.
A \emph{$2$-cocycle of degree $\mathbf{n}$} is a family
$\mathbf{q} = {(q_i)}_{i \in I}$
of functions $q_i \colon X \times X_i \rightarrow \GL(n_i, \CC)$
satisfying
\begin{equation*}
    q_i (x, y \triop z) q_i (y, z) =
    q_i (x \triop y, x \triop z) q_i (x, z)
\end{equation*}
for all $i \in I$, $x, y \in X$, $z \in X_i$.

Let $V = \bigoplus_{i \in I} \CC X_i \otimes \CC^{n_i}$,
and denote the element of $\CC X_i$
corresponding to an element $x$ of $X_i$
by $e_x$.
For a family $\mathbf{q} = {(q_i)}_{i \in I}$
of functions $q_i \colon X \times X_i \rightarrow \GL(n_i, \CC)$,
consider the linear map
$c^\mathbf{q} \colon V \otimes V \rightarrow V \otimes V$
acting on the basis elements of $V \otimes V$ by
\begin{equation*}
    c^\mathbf{q} (e_x v \otimes e_y w)
    = e_{x \triop y} q_i (x, y) (w) \otimes e_x v
\end{equation*}
for $x \in X_j$, $y \in X_i$, $v \in \CC^{n_j}$, $w \in \CC^{n_i}$.
By~\cite{ANDRUSKIEWITSCH2003177}*{Theorem~4.14},
the map $c^\mathbf{q}$ satisfies the braid equation
\begin{equation*}
    (\id \otimes c^\mathbf{q})
    (c^\mathbf{q} \otimes \id)
    (\id \otimes c^\mathbf{q}) =
    (c^\mathbf{q} \otimes \id)
    (\id \otimes c^\mathbf{q})
    (c^\mathbf{q} \otimes \id)
\end{equation*}
if and only if $\mathbf{q}$ is a $2$-cocycle.
Moreover, if $\mathbf{q}$ is a $2$-cocycle,
then there exists a group $G$ such that $V$ is a
Yetter–Drinfeld module over $G$, and any Yetter-Drinfeld module
over a finite group can be realized in this way.
The Nichols algebra of the Yetter-Drinfeld module obtained in this way
is denoted $\mathfrak{B} (X, \mathbf{q})$.

Let $X$ be a rack, $\mathbf{q}$ a $2$-cocycle,
and $V$ a vector space as above.
The $2$-cocycle $\mathbf{q}$ is \emph{finite} if the image of $q_i$
generates a finite subgroup of $\GL(n_i, \CC)$ for all $i \in I$.
Let $g \colon X \rightarrow \GL(V)$ be the map given by
\begin{equation*}
    g_x (e_y w) = e_{x \triop y} q_i (x, y) (w)
\end{equation*}
for $i \in I$, $x \in X$, $y \in X_i$, $w \in \CC^{n_i}$.
Using the rack structure of $\GL(V)$ given by conjugation,
the map $g$ is a rack morphism.
The cocycle $\mathbf{q}$ is \emph{faithful} if $g$ is injective.
These notions are important for the following definition.

\begin{definition}[\cite{andruskiewitsch2011finite}*{Definition~2.2}]\label{def:collapse}
    A finite rack $X$ \emph{collapses} if for every
    finite faithful cocycle $\mathbf{q}$ associated with
    any decomposition of $X$ and any degree $\mathbf{n}$,
    the Nichols algebra $\mathfrak{B} (X, \mathbf{q})$ is
    infinite-dimensional.
\end{definition}


We close this section by introducing
the type D condition,
which is the main focus of this paper.

\begin{definition}{\cite{andruskiewitsch2011finite}*{Definition~3.5}}\label{def:type_d}
    A rack $X$ is of \emph{type D} if
    there exists a decomposable subrack $Y = R \coprod S$ of $X$
    such that
    \begin{equation}\label{eq:type_d}
        r \triop (s \triop (r \triop s) ) \neq s
    \end{equation}
    for some $r \in R$ and $s \in S$.
\end{definition}

\begin{remark}\label{rem:type_d_conj}
    When $X$ is the rack arising from a conjugacy class
    $\OO_x^G$ of a group $G$,
    \Cref{eq:type_d} is equivalent to
    the condition that ${(rs)}^2 \neq {(sr)}^2$.
    The condition that $R$ and $S$ are disjoint
    translates into the condition that
    $r$ and $s$ are not conjugates
    in the group generated by $r$ and $s$.
\end{remark}

The significance of this definition stems from the following result,
which, loosely speaking, states that any Nichols algebra
arising from a rack of type D is infinite-dimensional,
allowing for a rack to be discarded from consideration
as being the source of a finite-dimensional Nichols algebra.

\begin{theorem}[\cite{andruskiewitsch2011finite}*{Theorem~3.6}]
    If $X$ is a finite rack of type D, then
    $X$ collapses.
\end{theorem}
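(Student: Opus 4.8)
The plan is to deduce collapse from the single decomposable subrack that witnesses type D, via two reductions followed by a rank-two analysis of the braiding. First I would establish the subrack principle: if $Z$ is a subrack of $X$, then any braided vector space attached to a decomposition and cocycle of $X$ restricts to a braided vector space attached to $Z$, and the corresponding Nichols algebra $\mathfrak{B}(Z, \mathbf{q}|_Z)$ embeds as a Nichols subalgebra of $\mathfrak{B}(X, \mathbf{q})$; hence if $Z$ collapses then so does $X$. Applying this to the decomposable subrack $Y = R \coprod S$ furnished by \Cref{def:type_d}, it suffices to prove that $Y$ collapses.

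Next I would fix an arbitrary finite faithful cocycle $\mathbf{q}$ of $Y$ of any degree $\mathbf{n}$, together with $r \in R$ and $s \in S$ satisfying the type D inequality $r \triop (s \triop (r \triop s)) \neq s$, and restrict attention to the finite subrack generated by $r$ and $s$, which inherits the two-block splitting from $R \coprod S$. The heart of the argument is to locate inside $V = \CC R \otimes \CC^{n_R} \oplus \CC S \otimes \CC^{n_S}$ a sub-braided vector space $W$, spanned by the vectors $e_x \otimes \CC^{n_i}$ with $x$ ranging over the $\triop$-orbit of $\{r,s\}$, that is stable under $c^{\mathbf{q}}$ and its inverse. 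On $W$ I would compute the braiding operator obtained by traversing the type D loop $s \mapsto r \triop s \mapsto s \triop (r \triop s) \mapsto r \triop (s \triop (r \triop s))$; because this loop does not return to $s$, the resulting block of $c^{\mathbf{q}}$ fails the finiteness criterion for Nichols algebras of rank two. Comparing $W$ against the classification of rank-two braided vector spaces of rack type whose Nichols algebra is finite-dimensional, $W$ falls outside the finite list, so $\mathfrak{B}(W)$, and therefore $\mathfrak{B}(Y, \mathbf{q})$, is infinite-dimensional.

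The main obstacle is uniformity: the inequality defining type D is purely rack-theoretic, whereas collapse quantifies over every finite faithful cocycle $\mathbf{q}$ and every degree $\mathbf{n}$, so the braiding block analyzed above carries unknown matrix entries $q_i(x,y) \in \GL(n_i, \CC)$. The crux is to show that the failure of $r \triop (s \triop (r \triop s)) = s$ obstructs finiteness regardless of these entries; in the group-theoretic reformulation of \Cref{rem:type_d_conj} this is exactly the statement that $(rs)^2 \neq (sr)^2$ cannot be reconciled with a finite-type braiding. Here faithfulness is essential: injectivity of the rack morphism $g \colon Y \to \GL(V)$ forces the distinct rack elements appearing along the type D loop to act by distinct operators, so the loop cannot degenerate and the obstruction persists for every admissible $\mathbf{q}$. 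I would organize this as a finite case analysis of the possible orbit shapes of $\{r,s\}$ under $\triop$, exhibiting in each case the forbidden rank-two configuration; the delicate point is the higher-degree cocycles $\mathbf{n} \neq (1,\dots,1)$, where the scalars become commuting families of matrices and one must pass to a common eigenvector to reduce to the scalar rank-two criterion.
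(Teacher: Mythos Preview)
The paper does not prove this theorem; it merely quotes it from \cite{andruskiewitsch2011finite}*{Theorem~3.6} as an input to the program, so there is no proof in the present paper to compare your proposal against.

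That said, your outline is in the right spirit but diverges from the actual argument in the cited source, and has a structural gap. The reduction ``if $Y$ collapses then $X$ collapses'' is not what is needed: collapse of $X$ quantifies over all finite faithful cocycles on $X$, and the restriction of such a cocycle to $Y$ need not be faithful on $Y$, so you cannot simply invoke collapse of $Y$. The correct logic is that for each fixed $\mathbf{q}$ on $X$, the braided subspace supported on $Y$ gives a Nichols subalgebra of $\mathfrak{B}(X,\mathbf{q})$, and it is \emph{that} particular Nichols algebra one must show is infinite-dimensional. More importantly, the proof in \cite{andruskiewitsch2011finite} does not proceed by an ad hoc ``case analysis of orbit shapes'' or by passing to common eigenvectors for matrix cocycles. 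Instead it realizes the braided vector space as a Yetter--Drinfeld module $M\oplus N$ over a finite group with $M$ supported on $R$ and $N$ on $S$, and then invokes a general criterion (ultimately resting on the Heckenberger--Schneider theory of Weyl groupoids and adjoint actions) to the effect that if there exist $r\in\operatorname{supp}M$, $s\in\operatorname{supp}N$ with $(rs)^2\neq(sr)^2$ which are not conjugate in $\langle r,s\rangle$, then $\mathfrak{B}(M\oplus N)$ is infinite-dimensional for \emph{every} choice of cocycle data. This criterion absorbs all the cocycle dependence at once, so no eigenvector reduction or orbit-by-orbit analysis is required; your sketch would need to reprove that criterion from scratch, which is the genuine content.
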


\section{Prior Work}\label{sec:prior_work}

Of particular research interest are those racks which have no
nontrivial quotients. We consider such racks in this section.

A rack $X$ is \emph{simple} if it is not trivial
and for any rack morphism $f \colon X \rightarrow Y$,
either $|Y| = 1$ or $|Y| = |X|$.
Simple racks have been classified
in~\cite{ANDRUSKIEWITSCH2003177}*{Theorem~3.9--Theorem~3.12}.

\begin{theorem}
[\cite{ANDRUSKIEWITSCH2003177}*{Theorem~3.9--Theorem~3.12}]
\label{thm:simple_rack_classif}
    For a simple rack $X$, exactly one of the following holds:
    \begin{enumerate}
        \item $|X| = p$, where $p$ is prime,
            and $X \cong \FF_p$, with $x \triop y = y + 1$.
            Such racks are called \emph{permutation racks}.
        \item $|X| = p^t$, with $p$ a prime and $t$ a positive integer,
            and $X \cong (\FF_p^t, T)$ an affine rack,
            where $T$ is the companion matrix of
            an irreducible monic polynomial in $\FF_p[X]$ of degree $t$
            different from $X$ and $X - 1$.
        \item $|X|$ is divisible by at least two distinct primes,
            and there exists a non-abelian simple group $L$,
            a positive integer $t$,
            and an automorphism $\theta$ in $\Aut(L)$
            such that $X$ is a twisted conjugacy class of $L^t$
            with respect to the automorphism
            \begin{equation*}
                x \colon L^t \rightarrow L^t, \quad
                x(\ell_1, \dots, \ell_t)
                = (\theta(\ell_t), \ell_1, \dots, \ell_{t - 1}).
            \end{equation*}
            Racks produced by such a construction are called
            \emph{twisted homogeneous racks}.
            In the case when $t = 1$,
            the rack $X$ is a twisted conjugacy class of a simple group.
    \end{enumerate}
\end{theorem}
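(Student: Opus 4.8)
The plan is to attach to the simple rack $X$ a canonical transitive group action and then read off the three alternatives from the structure of its minimal normal subgroups. First I would form the inner group $G = \operatorname{Inn}(X) := \langle \varphi_x : x \in X\rangle \leq \Sym_X$ and observe that, since a nontrivial decomposition of $X$ would produce a nontrivial quotient rack (each part maps to a point of a trivial rack on the index set), a simple rack is indecomposable. I would then prove that indecomposability is equivalent to transitivity of the $G$-action, the orbits of $G$ being exactly the parts of the finest decomposition of $X$. To dispose of the degenerate alternative, note that $x \mapsto \varphi_x$ is a rack morphism from $X$ into the conjugation rack of $G$, so by simplicity it is either injective or constant. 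If it is constant, then $\varphi_x = \sigma$ for a fixed $\sigma$ and $x \triop y = \sigma(y)$; transitivity forces $\sigma$ to be a single cycle and simplicity forces its length to be prime, which is exactly the permutation rack $\FF_p$ of case (1).

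In the remaining, faithful, case I identify $X$ with its image in $G$ and view each $\varphi_x$ as conjugation by $x$. The central reduction is a Galois-type correspondence between quotient racks of $X$ and systems of imprimitivity of $G$ (equivalently, subgroups $H \leq K \leq G$ with $H = G_x$ a point stabilizer); I would establish this correspondence carefully, as it is what converts the rack hypothesis into a group-theoretic one. Granting it, simplicity of $X$ becomes primitivity of $G$ on $X$, i.e.\ maximality of $H$. At this point I invoke the classical structure theorem that a minimal normal subgroup $N$ of a primitive permutation group is either elementary abelian and regular, or a direct power $N \cong L^t$ of a non-abelian simple group $L$; this dichotomy drives the two surviving cases.

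If $N$ is elementary abelian of order $p^t$ acting regularly, I identify $X$ with $N \cong \FF_p^t$ as a set and the base point with $0$, so that the point stabilizer acts linearly. Writing the operation in affine form $x \triop y = (\id - g)(x) + g(y)$, the rack axioms and faithfulness force $g$ to be a single automorphism $T \in \GL(t, \FF_p)$, giving the affine rack $(\FF_p^t, T)$. Indecomposability forces $\id - T$ to be invertible and $T \neq \id$, while primitivity forces $T$ to act irreducibly; together these say that the minimal polynomial of $T$ is irreducible of degree $t$ and different from $X$ and $X - 1$, which is case (2) (the subcase $t = 1$ giving the affine racks of prime order distinct from case (1)).

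Finally, if the socle is $N \cong L^t$ with $L$ non-abelian simple, then $|X|$ is divisible by the (at least two) primes dividing $|L|$, and I would use the description of primitive groups with non-abelian socle to realize $X$ as a coset space on which $N$ acts and to recover the distinguished rack element, identifying $X$ with a twisted conjugacy class of $L^t$. The substance is to show that the permutation of the simple factors induced by $G$, together with the outer data on each factor, collapses to the single cyclic shift $(\ell_1,\dots,\ell_t) \mapsto (\theta(\ell_t), \ell_1, \dots, \ell_{t-1})$ for one $\theta \in \Aut(L)$, exhibiting $X$ as the twisted homogeneous rack of case (3), with $t = 1$ recovering a twisted conjugacy class of $L$. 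I expect this to be the main obstacle: pinning the twist down to a single cyclic shift (rather than an arbitrary wreath-type automorphism) requires showing that $G$ permutes the factors in one $t$-cycle and that simplicity forces the factorwise automorphisms to amount to one $\theta$, which is where the finer theory of primitive groups with non-abelian socle, and the careful translation of $\triop$ into twisted conjugation in $L^t$, must be deployed. A secondary technical burden, underlying the whole argument, is the rigorous proof of the congruence--imprimitivity correspondence that powers the passage from simplicity to primitivity.
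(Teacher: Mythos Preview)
The paper does not contain a proof of this statement: it is quoted verbatim as background from \cite{ANDRUSKIEWITSCH2003177}*{Theorems~3.9--3.12}, so there is nothing in the present paper to compare your proposal against. Your outline is, in its broad architecture, the strategy of the original Andruskiewitsch--Gra\~na argument: attach to $X$ a canonical transitive permutation group, upgrade simplicity of the rack to primitivity of that action via a congruence/block correspondence, and then split into the three cases according to whether the socle is trivial, elementary abelian regular, or a power of a non-abelian simple group. You have also correctly located the two places where real work is needed, namely the congruence--imprimitivity dictionary and, in case~(3), the reduction of the wreath-type automorphism data to a single cyclic shift with one $\theta$; both are handled in the cited source rather than here.
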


We refer to simple racks as in (3) as
\emph{twisted homogeneous racks of type $(L, t, \theta)$}.
Following~\cite{andruskiewitsch2010twisted}, we denote
the twisted homogeneous rack of $(e, \dots, e, \ell)$ in $L^t$
by $\hC_\ell$.
The power of this notation stems from the following theorem.

\begin{theorem}
[\cite{andruskiewitsch2010twisted}*{Proposition 3.3}]
\label{thm:tw_hom_rack_prod}
    An element $(x_1, \dots, x_t)$ of $L^t$ is in $\hC_\ell$
    if and only if
    $x_t x_{t - 1} \cdots x_2 x_1 \in \OO_\ell^{L, \theta}$.
\end{theorem}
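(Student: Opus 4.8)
The plan is to unwind the definition of the twisted conjugacy class directly. Recall that $\hC_\ell$ is the orbit of $(e,\dots,e,\ell)$ in $L^t$ under the twisted conjugation action $y \rightharpoonup_x z = yz\,x(y^{-1})$, where $x$ is the cyclic automorphism $x(\ell_1,\dots,\ell_t) = (\theta(\ell_t),\ell_1,\dots,\ell_{t-1})$. So I would first compute, for an arbitrary $y = (y_1,\dots,y_t) \in L^t$, the product $y \rightharpoonup_x (e,\dots,e,\ell)$ componentwise. Writing $x(y^{-1}) = (\theta(y_t^{-1}), y_1^{-1}, \dots, y_{t-1}^{-1})$, the $i$-th coordinate of $y \rightharpoonup_x (e,\dots,e,\ell)$ is $y_i \cdot e \cdot y_{i-1}^{-1} = y_i y_{i-1}^{-1}$ for $2 \le i \le t$, and the first coordinate is $y_1 \cdot e \cdot \theta(y_t^{-1}) = y_1 \theta(y_t^{-1})$; wait—the last coordinate is the one carrying $\ell$, so the $t$-th coordinate is $y_t\, \ell\, y_{t-1}^{-1}$ and the first coordinate is $y_1\theta(y_t^{-1})$. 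Thus $(x_1,\dots,x_t) \in \hC_\ell$ iff there exist $y_1,\dots,y_t \in L$ with $x_1 = y_1\theta(y_t^{-1})$, $x_i = y_i y_{i-1}^{-1}$ for $2 \le i \le t-1$, and $x_t = y_t\,\ell\,y_{t-1}^{-1}$.

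Next I would extract the invariant. From the relations $x_i = y_i y_{i-1}^{-1}$ for $2 \le i \le t$ (absorbing the $\ell$ into the last one by writing $x_t y_{t-1} = y_t \ell$, hence $y_{t-1} = x_t^{-1} y_t \ell$ is not quite a telescoping—better to keep $\ell$ explicit) I compute the ordered product $x_t x_{t-1}\cdots x_2$. This telescopes: $x_t x_{t-1} \cdots x_2 = (y_t \ell y_{t-1}^{-1})(y_{t-1} y_{t-2}^{-1}) \cdots (y_2 y_1^{-1}) = y_t \ell y_1^{-1}$. Hmm, I need $\ell$ to end up conjugated correctly; note $y_t \ell y_1^{-1}$ is not a conjugate of $\ell$ unless $y_t = y_1$. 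Multiplying on the right by $x_1 = y_1 \theta(y_t^{-1})$ gives $x_t x_{t-1}\cdots x_2 x_1 = y_t \ell y_1^{-1} y_1 \theta(y_t^{-1}) = y_t\,\ell\,\theta(y_t^{-1}) = y_t\,\ell\,\theta(y_t)^{-1} = y_t \rightharpoonup_\theta \ell$, which is exactly a point of $\OO_\ell^{L,\theta}$. This proves the forward direction.

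For the converse, suppose $x_t x_{t-1}\cdots x_2 x_1 \in \OO_\ell^{L,\theta}$, say it equals $g\,\ell\,\theta(g)^{-1}$ for some $g \in L$. I then need to produce $y_1,\dots,y_t$ realizing the coordinate equations; the natural choice is to set $y_t = g$ and then define $y_{t-1}, y_{t-2}, \dots, y_1$ recursively by $y_{i-1} = x_i^{-1} y_i$ for $i$ running down from $t$ (with the $\ell$ inserted: $y_{t-1} = (y_t \ell)^{-1} x_t \cdot$—I should be careful and instead solve $x_i = y_i y_{i-1}^{-1}$ as $y_{i-1} = x_i^{-1} y_i$ for $2 \le i \le t-1$ and handle the $t$-th equation separately). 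One then checks that the remaining equation $x_1 = y_1 \theta(y_t^{-1})$ holds automatically, precisely because the telescoped product was assumed to lie in $\OO_\ell^{L,\theta}$—this is the same telescoping computation read backwards. I do not anticipate a serious obstacle here; the only thing requiring care is bookkeeping the position of $\ell$ and the single $\theta$-twist, and making sure the cyclic shift's index conventions (whether $\theta$ lands on coordinate $1$ from coordinate $t$) match the convention in \Cref{thm:simple_rack_classif}. The main point is simply that twisted conjugation on $L^t$ by the cyclic-shift-with-$\theta$ automorphism collapses, via telescoping, to ordinary $\theta$-twisted conjugation of the single product $x_t \cdots x_1$ on $L$.
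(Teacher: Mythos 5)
Your proposal is correct, and since the paper does not prove this statement itself --- it is quoted from \cite{andruskiewitsch2010twisted}*{Proposition~3.3} --- your direct unwinding of the twisted conjugation action on $L^t$ followed by the telescoping of the product is exactly the standard argument one would give (and the one the cited reference gives). The only step you leave as ``holds automatically'' in the converse does indeed check out: with $y_t = g$, $y_{t-1} = x_t^{-1} y_t \ell$ and $y_{i-1} = x_i^{-1} y_i$ for $2 \le i \le t-1$, one gets $y_1 = (x_t x_{t-1} \cdots x_2)^{-1} g\ell$, hence $y_1\theta(y_t^{-1}) = (x_t\cdots x_2)^{-1}\, g\ell\theta(g)^{-1} = (x_t\cdots x_2)^{-1}(x_t\cdots x_2 x_1) = x_1$, so the proof is complete.
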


\Cref{thm:tw_hom_rack_prod} establishes a bijection
between twisted homogeneous racks of type $(L, t, \theta)$
and twisted conjugacy classes of $L$
and allows twisted homogeneous racks to be identified
by a single element of $L$.

In this paper, we are interested in simple racks
over the alternating groups.
By \Cref{thm:simple_rack_classif},
the twisted homogeneous racks of type
$(\Alt_n, t, \theta)$ with $n \geq 5$ are simple.
It is well-known that, for $n \neq 6$,
the automorphisms of $\Alt_n$ are given by conjugation
by an element of $\Sym_n$
and $\Out(\Alt_n) \cong C_2$
(see, for example, \cite{suzuki1982group}*{Chapter~3.2}).
We therefore consider only the automorphisms
$\theta = \id = \iota_e$ and $\theta = \iota_{(1\ 2)}$,
where $\iota_{u}$ acts on $\Alt_n$ by conjugation by $u$.

The case of $t = 1$ has been considered
in~\cite{andruskiewitsch2011finite}
and~\cite{andruskiewitsch2011nichols}.
Set $u$ to equal $e$ if $\theta = \id$
and $(1\ 2)$ if $\theta = \iota_{(1\ 2)}$.
It is known that
if the cycle type of $xu$ is not in the following list,
then $\OO_x^{G, \theta}$ is of type D
\cite{andruskiewitsch2011finite}*{Theorem~4.1}
(see also \cite{andruskiewitsch2011nichols}*{Theorem~6.1}):
\begin{enumerate}[label=\letters]
    \item $(2, 3)$; $(2^3)$; $(1^k, 2)$.
    \item $(3^2)$; $(2^2, 3)$; $(1^k, 3)$; $(2^4)$;
        $(1^2, 2^2)$; $(1, 2^2)$;
        $(1, p)$, $(p)$, where $p$ is prime.
\end{enumerate}
Even if $xu$ has cycle type in list (b), $\OO_x^{G, \theta}$
is known to collapse~\cite{andruskiewitsch2011finite}*{Theorem~1.1}.

For the rest of this paper, we consider only the case
of $t > 1$,
which was first investigated in~\cite{andruskiewitsch2010twisted}.
Many twisted homogeneous racks over the alternating groups
were shown to be of type D
in~\cite{andruskiewitsch2010twisted}*{Theorem~1.2},
but many racks, listed in
\Cref{tab:tw_hom_a_n_type_d_solved}
and \Cref{tab:tw_hom_a_n_type_d_unknown},
remained not known to be of type D.
Interestingly, two twisted homogeneous racks,
listed as part of \Cref{tab:tw_hom_a_n_type_d_unknown},
were found \emph{not} to be of type D
in~\cite{andruskiewitsch2010twisted},
and it is not known whether these racks collapse.

\Cref{thm:main} resolves all cases listed in
\Cref{tab:tw_hom_a_n_type_d_solved}.

\begin{table}[ht]
\begin{center}
\begin{tabular}{|c|c|c|c|c|}
    \hline
    $u$ & $n$ & Type of $\ell u$ & $t$ & Resolved by\\
    \hline
    $e$ & 5 & $(1^5)$ & $4$ & \Cref{prop:id_1} \\
    \cline{2-5}
    & $5$ & $(1,2^{2})$ & $4$, odd & \Cref{prop:id_12r} \\
    \cline{2-4}
    & 6 & $(1^{2},2^{2})$ & odd & \\
    \cline{2-4}
    & 8 & $(2^{4})$ & odd & \\
    \cline{2-5}
    & any & $(1^{r_1},2^{r_2},4^{r_4})$ & $2$
        & \Cref{prop:id_124},\negphantom{,} \\
    && with $r_4>0$, $r_2+r_4$ even && \Cref{prop:id_14} \\
    \hline
    $(1\ 2)$ & $5$ & $(1^3,2)$ & $2$, $4$
        & \Cref{prop:iota_12} \\
    \cline{2-5}
    & 6 & $(1^4,2)$ & $2$ & \Cref{prop:iota_12} \\
    \cline{2-5}
    & 6 & $(2^3)$ & 2 & \Cref{prop:iota_222} \\
    \cline{2-5}
    & 7 & $(1,2^3)$ & $2$, odd & \Cref{prop:iota_12r} \\
    \cline{2-4}
    & 8 & $(1^2,2^3)$ & odd & \\
    \cline{2-4}
    & 10 & $(2^5)$ &  odd &  \\
    \hline
\end{tabular}
\caption{Twisted homogeneous racks $\hC_\ell$
    of type $(\Alt_n, t, \iota_u)$,
    where $t > 1$ and $n \geq 5$,
    shown to be of type D
    by \Cref{thm:main}.
}
\label{tab:tw_hom_a_n_type_d_solved}
\end{center}
\end{table}

\begin{table}[ht]
\begin{center}
\begin{tabular}{|c|c|c|c|c|}
    \hline
    $u$ & $n$ & Type of $\ell u$ & $t$ & Type D \\
    \hline
    $e$ & any & $(1^n)$ & $\gcd(t,n!) = 1$ & unknown \\
        & 5 & $(1^5)$ & $2$ & not type D \\
        & 6 & $(1^6)$ & $2$ & not type D \\
    \hline
    $(1\ 2)$ & any & $(1^{s_1},2^{s_2},\dots,n^{s_n})$
        with $s_1\leq 1$, $s_2=0$, & any & unknown \\
    && $s_h\geq 1$ for some $h$ with $3\leq h\leq n$ && \\
    &&&&\\
    && $(1^{s_1},2^{s_2},4^{s_4})$
        with $s_1\leq 2$ or $s_2\geq 1$, & $2$ & unknown \\
    && $s_2+s_4$ odd, $s_4\geq 1$ && \\
    \hline
\end{tabular}
\caption{Twisted homogeneous racks $\hC_\ell$
    of type $(\Alt_n, t, \iota_u)$, where $t > 1$ and $n \geq 5$,
    not known to be of type D.
    Two of these racks were found not to be of type D
    in~\cite{andruskiewitsch2010twisted}.
}
\label{tab:tw_hom_a_n_type_d_unknown}
\end{center}
\end{table}

\section{Twisted Homogeneous Racks}
\label{sec:tw_hom_racks}

In this section,
we prove \Cref{thm:main},
considering first the twisted homogeneous racks
arising from the identity automorphism
in \Cref{sec:id},
and then those arising from $\iota_{(1\ 2)}$,
the automorphism given by conjugation by $(1\ 2)$,
in \Cref{sec:iota}.

\subsection{Identity Automorphism}
\label{sec:id}

In this section, we prove that the racks listed in
the first section of \Cref{tab:tw_hom_a_n_type_d_solved}
are of type D.
We begin with the case when $\ell = e$.

\begin{prop}\label{prop:id_1}
    For even $t$ greater than or equal to $4$
    and $n \geq 5$,
    the twisted homogeneous rack
    $\hC_e$ of type $(\Alt_n, t, \id)$
    is of type D.
\end{prop}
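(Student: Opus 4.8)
The plan is to exhibit a decomposable subrack $Y = R \coprod S$ of $\hC_e$ and a pair $r \in R$, $s \in S$ violating \Cref{eq:type_d}, working through the product description of $\hC_e$ given by \Cref{thm:tw_hom_rack_prod}. Since $\theta = \id$ and $\ell = e$, an element $(x_1,\dots,x_t) \in \Alt_n^t$ lies in $\hC_e$ precisely when $x_t x_{t-1}\cdots x_1 = e$. The first step is to reduce to a small, concrete situation: pick a fixed even permutation $\sigma \in \Alt_n$ (for instance a $3$-cycle or a product of two transpositions, chosen so that two non-conjugate "copies" of it are available), and build tuples whose entries are powers of $\sigma$ and whose product telescopes to $e$ because $t$ is even. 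The evenness of $t$ is exactly what lets me pair entries $\sigma$ and $\sigma^{-1}$ so the ordered product collapses; this is why the hypothesis $t \geq 4$ even appears, and $t=4$ should already contain the essential construction, with larger even $t$ handled by padding with identity entries.

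Next I would define $R$ and $S$ as the subracks generated (under $\triop_{\id}$, i.e. coordinatewise conjugation) by two such carefully chosen tuples $r$ and $s$. Because the rack operation on $\hC_e$ is $y \triop z = y z y^{-1}$ applied coordinatewise, the subrack generated by a single element inside a twisted homogeneous rack over an abelian-ish configuration stays small and controllable; ideally $r$ and $s$ commute coordinatewise or nearly so, making $R$ and $S$ finite and easy to list. The two things to check here are (i) $R$ and $S$ are genuinely disjoint — equivalently, by \Cref{rem:type_d_conj}'s translation, that $r$ and $s$ are not conjugate in $\langle r, s\rangle \leq \Alt_n^t$ — and (ii) $Y = R \coprod S$ is a decomposition, i.e. $Y \triop R = R$ and $Y \triop S = S$. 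Disjointness can be arranged by a parity or support argument on the coordinates: choose $r$ and $s$ so that some coordinate of $r$ and the corresponding coordinate of $s$ already fail to be conjugate in $\Alt_n$ (e.g. different cycle types), or so that a product-type invariant separates them.

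Finally I would verify the type D inequality $r \triop (s \triop (r \triop s)) \neq s$, which by \Cref{rem:type_d_conj} is just $(rs)^2 \neq (sr)^2$ in the group $\Alt_n^t$, hence reduces to checking $(r_i s_i)^2 \neq (s_i r_i)^2$ in $\Alt_n$ for a single coordinate $i$ — a finite computation with explicit permutations. The main obstacle I anticipate is the simultaneous balancing act in the construction: the product constraint $x_t\cdots x_1 = e$, the disjointness of $R$ and $S$, the closure conditions making $Y$ decomposable, and the noncommutation needed for $(rs)^2 \neq (sr)^2$ pull in somewhat opposite directions, since the product-to-$e$ and decomposability conditions push toward commuting/abelian choices while the type D inequality needs genuine noncommutativity in at least one coordinate. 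The resolution should be to localize all the noncommutativity into two or three coordinates (using $\Alt_n$ with $n \geq 5$, which is non-abelian and has elements realizing $(rs)^2 \neq (sr)^2$, e.g. a $3$-cycle and a transposition-pair) and let the remaining coordinates absorb the telescoping product; I expect the bulk of the write-up to be the explicit choice of these coordinates and the routine verification of the four conditions for small $t$, then an easy induction or padding argument for general even $t \geq 4$.
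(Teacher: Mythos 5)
There is a genuine gap: your proposal rests on a wrong description of the rack structure of $\hC_e$. The twisted homogeneous rack of type $(\Alt_n, t, \id)$ is a twisted conjugacy class of $\Alt_n^t$ with respect to the cyclic-shift automorphism $(\ell_1,\dots,\ell_t)\mapsto(\ell_t,\ell_1,\dots,\ell_{t-1})$, so the operation is \emph{not} coordinatewise conjugation; for tuples $a,b$ one has
\begin{equation*}
    a \triop b = \left( a_1 b_t a_t^{-1},\ a_2 b_1 a_1^{-1},\ \dots,\ a_t b_{t-1} a_{t-1}^{-1} \right),
\end{equation*}
which permutes the coordinates cyclically. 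Consequently your two key reductions fail: \Cref{rem:type_d_conj} applies to ordinary conjugacy classes, so \Cref{eq:type_d} is not equivalent to $(rs)^2\neq(sr)^2$ computed inside $\Alt_n^t$ (one would have to pass to the semidirect product with the shift), and it certainly does not reduce to checking $(r_is_i)^2\neq(s_ir_i)^2$ in a single coordinate. Likewise the claims that the subrack generated by one tuple is small, that disjointness can be checked coordinatewise by cycle type, and that closure of $Y$ is a coordinatewise matter all presuppose the coordinatewise operation. A sanity check that something must be wrong: a purely coordinatewise argument of the kind you sketch would apply verbatim to $t=2$, yet $\hC_e$ of type $(\Alt_5,2,\id)$ and $(\Alt_6,2,\id)$ are known \emph{not} to be of type D (\Cref{tab:tw_hom_a_n_type_d_unknown}), so the hypothesis $t\geq 4$ must enter through the shifted operation, not through padding.

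For comparison, the paper's proof takes the copy $G=\{e,(1\,2\,3),(1\,3\,2),(1\,2)(4\,5),(2\,3)(4\,5),(1\,3)(4\,5)\}\cong\Sym_3$ inside $\Alt_n$, lets $R$ (resp.\ $S$) be the $t$-tuples from $G$ with ordered product $e$ and all entries of even (resp.\ odd) parity under the isomorphism to $\Sym_3$, checks that both the product condition and the entrywise parity are preserved by the shifted operation displayed above (this is where the twist is handled correctly), and then verifies \Cref{eq:type_d} directly for $r=e$ and $s=(x,x,y,\dots,y)$ with $x=(1\,2)(4\,5)$, $y=(2\,3)(4\,5)$; the explicit computation is exactly where evenness of $t$ and $t\geq 4$ are used. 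If you want to salvage your approach, you must redo the closure, disjointness, and inequality checks against the genuine (shifted) operation, at which point you will find yourself forced into a construction of essentially this shape.
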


\begin{proof}
    Let $G = \{ e, (1\ 2\ 3), (1\ 3\ 2),
    (1\ 2)(4\ 5), (2\ 3)(4\ 5), (1\ 3)(4\ 5) \}$.
    The group $G$ is isomorphic to $\Sym_3$,
    with the isomorphism $\varphi$ given by
    considering the action of $G$ only on $\{1, 2, 3\}$.

    Let $R$ be the set of $t$-tuples $(g_1, \dots, g_t)$
    of elements of $G$
    with $g_t g_{t - 1} \cdots g_2 g_1 = e$
    and each $\varphi(g_i)$ an even permutation,
    and let $S$ be the set of $t$-tuples $(g_1, \dots, g_t)$
    of elements of $G$
    with $g_t g_{t - 1} \cdots g_2 g_1 = e$
    and each $\varphi(g_i)$ an odd permutation.
    Let $Y = R \cup S$.
    The set $Y$ is a subset of $\hC_e$
    by \Cref{thm:tw_hom_rack_prod},
    and because $t$ is even, $R$ and $S$ are nonempty.
    Moreover, for two elements $a = (a_1, \dots, a_t)$
    and $b = (b_1, \dots, b_t)$
    of $Y$,
    \begin{align*}
        a \triop b &= (a_1, \dots, a_t) \triop (b_1, \dots, b_t) \\
        &= \left( a_1 b_t a_t^{-1},
        a_2 b_1 a_1^{-1}, \dots, a_t b_{t - 1} a_{t - 1}^{-1} \right),
    \end{align*}
    each element of which has the same parity
    upon mapping through $\varphi$
    as the corresponding $b_i$.
    Also, the product of the tuple elements of $a \triop b$ is
    \begin{align*}
        (a_t b_{t - 1} a_{t - 1}^{-1})
        (a_{t - 1} b_{t - 2} a_{t - 2}^{-1}) \cdots
        (a_2 b_1 a_1^{-1}) (a_1 b_t a_t^{-1})
        &= a_t (b_{t - 1} b_{t - 2} \cdots b_1) b_t a_t^{-1} \\
        &= a_t b_t^{-1} b_t a_t^{-1} \\
        &= e,
    \end{align*}
    so the sets $Y$, $R$, and $S$ are closed under the rack operation.
    Therefore, $Y$, $R$, and $S$ are racks,
    and $Y = R \coprod S$.

    Let $x = (1\ 2)(4\ 5)$ and $y = (2\ 3)(4\ 5)$.
    Now let $r = e \in R$ and
    $s = (x, x, y, y, \dots) \in S$,
    with each element of $s$ except for the first two equal to
    $y$.
    Then \begin{align*}
        r \triop (s \triop (r \triop s))
        &= r \triop (s \triop (y, x, x, y, \dots)) \\
        &= r \triop (x, xyx^{-1}, y, yxy^{-1}, \dots) \\
        &= \begin{cases}
            (yxy^{-1}, x, xyx^{-1}, y), &\text{if } t = 4 \\
            (y, x, xyx^{-1}, y, yxy^{-1}, \dots) &\text{if } t > 4
        \end{cases} \\
        &\neq s,
    \end{align*}
    so $\hC_\ell$ is of type D.
\end{proof}

We now consider the case when $\ell$ is an involution.

\begin{prop}\label{prop:id_12r}
    For $\ell$ in $\Alt_n$ and of cycle type $(1^{r_1}, 2^{r_2})$,
    with $r_2$ even and positive,
    the twisted homogeneous rack $\hC_\ell$ of type
    $(\Alt_n, t, \id)$, with $t \geq 3$, is of type D.
\end{prop}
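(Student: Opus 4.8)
The plan is to mimic the mechanism behind \Cref{prop:id_1}: inside $\hC_\ell$ I will produce a decomposable subrack $Y=R\coprod S$ attached to a well-chosen subgroup $G\le\Alt_n$ together with an index-two subgroup $H\trianglelefteq G$. Letting $R$ (resp.\ $S$) be the set of $t$-tuples lying in $\hC_\ell$ all of whose coordinates lie in $H$ (resp.\ in the nontrivial coset $gH$), one checks that $Y$ is a subrack and $(R,S)$ a decomposition of it: for $a,b\in Y$ the $i$-th coordinate of $a\triop b$ is $a_i b_{i-1} a_{i-1}^{-1}$ (indices mod $t$), and since $[G:H]=2$ its coset mod $H$ equals the coset of $b_{i-1}$, so $a\triop b$ lies in $R$ iff $b$ does and in $S$ iff $b$ does; closure inside $\hC_\ell$ is automatic because, as in \Cref{thm:tw_hom_rack_prod}, the product $x_t\cdots x_1$ of the coordinates of $a\triop b$ is a conjugate of that of $b$. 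The only real content is therefore the choice of $G$ and the nonemptiness of $R$ and $S$.

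By \Cref{thm:tw_hom_rack_prod}, $R\neq\emptyset$ exactly when $H$ contains a permutation of cycle type $(1^{r_1},2^{r_2})$ (any such element is the product of the coordinates of $(\ell_0,e,\dots,e)$); moreover the set of $t$-fold products of coordinates drawn from $gH$ is $(gH)^t=g^tH$, which equals $H$ when $t$ is even and $gH$ when $t$ is odd, so $S\neq\emptyset$ iff $H$ (resp.\ $gH$) meets $\OO_\ell^{\Alt_n,\id}$. I will take $G$ to be an elementary abelian $2$-subgroup of $\Alt_n$ built from copies of the Klein four-group acting regularly on disjoint blocks of four points: since $r_2$ is even one can split the $2r_2$ points moved by $\ell$ into $r_2/2$ such blocks, and then the elements of $G$ with every block-component nontrivial are precisely the permutations of cycle type $(1^{r_1},2^{r_2})$ (there are $3^{r_2/2}$ of them). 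Choosing $H$ as the kernel of a single ``block'' functional keeps $3^{r_2/2-1}$ of these elements in $H$ and at least one in $gH$, which gives all the required nonemptiness.

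For the type D inequality, abelianness of $G$ and exponent $2$ make the triple operation telescope: writing $G$ additively, the $i$-th coordinate of $r\triop(s\triop(r\triop s))$ equals $r_i+r_{i-1}+r_{i-2}+r_{i-3}+s_{i-1}+s_{i-2}+s_{i-3}$, so it differs from $s_i$ exactly when $\sum_{j=i-3}^{i}(r_j+s_j)\neq e$ in $G$. Hence it suffices to pick $r\in R$, $s\in S$ whose coordinatewise difference is not killed by every cyclic window of length four; when $t\neq 4$ this is arranged by letting $r$ and $s$ differ in a single well-placed coordinate, and when $t=4$ (where every length-four window is the whole index set) by choosing $r$ and $s$ with distinct products $x_4x_3x_2x_1$ inside $H\cap\OO_\ell^{\Alt_n,\id}$, which is possible precisely because $G$ was chosen to put at least two $(1^{r_1},2^{r_2})$-permutations in $H$ (this is why $3^{r_2/2-1}\ge 2$ matters).

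The main obstacle is uniformity across all $t\ge3$ together with the ``tight'' cycle types where there is too little room to build such a $G$. When $r_2=2$ we only get one $(1^{r_1},2^{r_2})$-element in $H$, so the case $t=4$ with $\ell$ of type $(1^{r_1},2^{2})$ must be treated separately; for $n$ large enough this is done with a $D_8$ (via $\Sym_4\hookrightarrow\Alt_{n}$) in place of $G$, whose Klein-four index-two subgroup carries three involutions of the right cycle type, but for the smallest ambient group $\Alt_5$ no subgroup has an index-two subgroup containing two involutions at all, and there one must exhibit an explicit decomposable subrack of $\hC_{(1\,2)(3\,4)}$ directly, in the spirit of the $\Sym_3$-construction in the proof of \Cref{prop:id_1}, and check the inequality by hand. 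Reconciling these several constructions so that a single statement covers every $t\ge3$ and every admissible cycle type is the delicate part of the argument.
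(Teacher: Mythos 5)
Your coset-based decomposition (all coordinates in $H$ versus all in the nontrivial coset $gH$) does give a decomposable subrack of $\hC_\ell$, but the verification of \Cref{eq:type_d} does not go through as written. Since $H$ and $gH$ are disjoint, any $r\in R$ and $s\in S$ differ in \emph{every} coordinate, so your plan for $t\neq 4$ of ``letting $r$ and $s$ differ in a single well-placed coordinate'' is impossible under your own definitions of $R$ and $S$: the difference vector $(r_j+s_j)_j$ always has all entries in $gH$, and what you would actually need is that it can be chosen so that some cyclic window of length four has nonzero sum --- an argument you never supply (it can be supplied, but it needs its own small analysis, e.g.\ the $t=3$ window really involves only two distinct indices). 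More seriously, the exceptional case $r_2=2$, $t=4$ --- which includes the row $n=5$, type $(1,2^2)$, $t=4$ of \Cref{tab:tw_hom_a_n_type_d_solved} that this proposition is specifically needed to cover --- is not proved: your $D_8$ substitute is nonabelian, so the telescoping identity on which your inequality criterion rests no longer applies and nothing is checked there, and for $n=5$ you explicitly leave the required explicit subrack and the verification of \Cref{eq:type_d} undone. As it stands the proposal therefore does not establish the statement in its full range.

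The paper's proof sidesteps all of this by partitioning differently. It takes the Klein four-group $\{e,x,y,z\}$ with $x,y,z$ of cycle type $(1^{r_1},2^{r_2})$ (built from $r_2/2$ blocks of four moved points, which is exactly where the hypothesis that $r_2$ is even enters), lets the coordinates range over the whole group, and distinguishes $R$ from $S$ by the value of the product of the coordinates ($x$ versus $y$), which is a rack invariant because the group is abelian. Then $r=(x,e,\dots,e)$ and $s=(y,e,\dots,e)$ genuinely differ in a single coordinate, and a short direct computation yields \Cref{eq:type_d} uniformly for all $t\geq 3$, including $r_2=2$, $t=4$, $n=5$. If you replace your index-two coset partition (the mechanism of \Cref{prop:id_1}) by this product-based partition, the window-sum formalism becomes unnecessary and your exceptional cases disappear.
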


\begin{proof}
    Let
    \begin{align*}
        x &= \prod\limits_{i = 1}^{\frac{r_2}{2}}
        (4i - 3\ 4i - 2)(4i - 1\ 4i) \\
        y &= \prod\limits_{i = 1}^{\frac{r_2}{2}}
        (4i - 3\ 4i - 1)(4i - 2\ 4i) \\
        z &= \prod\limits_{i = 1}^{\frac{r_2}{2}}
        (4i - 3\ 4i)(4i - 2\ 4i - 1).
    \end{align*}
    Then $\{e, x, y, z\}$ is isomorphic to the Klein four-group,
    which is abelian.
    Let $Y$ be the set
    consisting of $t$-tuples $(c_1, \dots, c_t)$
    with each $c_i$ in $\{e, x, y, z\}$
    and with $\prod_{i = 1}^t c_i$ equal to $x$ or $y$.
    By \Cref{thm:tw_hom_rack_prod}, $Y$ is a subset of $\hC_\ell$.
    Because the Klein four-group is abelian,
    the rack action in $Y$ does not change
    the product of the elements of a tuple in $Y$.
    Therefore, letting $R$ be the subset of $Y$
    consisting of tuples $(c_1, \dots, c_t)$
    with $\prod_{i = 1}^t c_i = x$
    and $S$ be the subset of $Y$
    consisting of tuples $(c_1, \dots, c_t)$
    with $\prod_{i = 1}^t c_i = y$,
    we find that $R$, $S$, and $Y = R \coprod S$ are racks.
    
    Now let $r = (x, e, \dots) \in R$
    and $s = (y, e, \dots) \in S$.
    These elements satisfy \Cref{eq:type_d} when $t \geq 3$.
    Indeed,
    \begin{align*}
        s \triop (r \triop s) &=
        s \triop (x, z, e, \dots) \\
        &= (y, z, z, \dots),
    \end{align*}
    so $r \triop (s \triop (r \triop s))$ is equal to
    $(y, z, z)$ if $t = 3$,
    or to $(x, z, z, z, \dots)$ if $t \geq 4$.
    In both cases, \Cref{eq:type_d} is satisfied.
\end{proof}

\Cref{prop:id_12r} resolves the first three rows
of \Cref{tab:tw_hom_a_n_type_d_solved}.
In particular, it proves that racks $\hC_\ell$
of type $(\Alt_n, t, \id)$
with $n$ equal to $5$ or $6$,
the integer $t$ equal to $4$ or odd and greater than or equal to $3$,
and $\ell$ an involution of type $(1, 2^2)$ or $(1^2, 2^2)$
and racks $\hC_\ell$
of type $(\Alt_8, t, \id)$
with $t$ odd and greater than or equal to $3$
and $\ell$ an involution of type $(2^4)$
are all of type D.
We proceed to resolve the last row of
the first section of \Cref{tab:tw_hom_a_n_type_d_solved}.

\begin{prop}\label{prop:id_124}
    For $\ell$ in $\Alt_n$ and of cycle type
    $(1^{r_1}, 2^{r_2}, 4^{r_4})$, with $r_2, r_4 > 0$,
    the twisted homogeneous rack $\hC_\ell$ of type
    $(\Alt_n, 2, \id)$ is of type D.
\end{prop}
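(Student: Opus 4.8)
The plan is to mimic the structure of the proof of \Cref{prop:id_12r}: find a small abelian subgroup $A$ of $\Alt_n$ whose nonidentity elements have the right cycle type, build $Y$ as a set of $2$-tuples of elements of $A$ whose product lies in the relevant twisted conjugacy class, split $Y = R \coprod S$ by the value of the product, and exhibit $r \in R$, $s \in S$ violating \Cref{eq:type_d}. Since $t = 2$ here, the rack operation on a $2$-tuple is $(a_1, a_2) \triop (b_1, b_2) = (a_1 b_2 a_2^{-1}, a_2 b_1 a_1^{-1})$, and when $A$ is abelian the product $c_1 c_2$ of a tuple is preserved under the action, exactly as in \Cref{prop:id_12r}. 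So once the subgroup is chosen, verifying that $R$, $S$, and $Y$ are subracks is routine.

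The main new ingredient is the choice of $A$. An element of cycle type $(1^{r_1}, 2^{r_2}, 4^{r_4})$ with $r_2, r_4 > 0$ has order $4$, so $A$ cannot be an elementary abelian $2$-group; instead I would take $A$ to be a cyclic group of order $4$ together with enough extra structure to realize the $2$-cycles. Concretely, working on $4 r_4$ points I would use a product of $r_4$ disjoint $4$-cycles, say $w = \prod_{j=1}^{r_4}(4j-3\ 4j-2\ 4j-1\ 4j)$, so that $w$ has type $(4^{r_4})$, $w^2$ has type $(2^{2 r_4})$, and $w^3 = w^{-1}$ has type $(4^{r_4})$; and on a further $2 r_2$ points I would use a product of $r_2$ disjoint transpositions $v$ of type $(2^{r_2})$, which commutes with $w$. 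Then in the abelian group $\langle w, v\rangle \cong C_4 \times C_2$, the element $wv$ has cycle type $(2^{r_2}, 4^{r_4})$ and $w^2 v$ has cycle type $(1^?, 2^{r_2 + 2 r_4})$ — I will have to check parities and pad with fixed points to land exactly on type $(1^{r_1}, 2^{r_2}, 4^{r_4})$ inside $\Alt_n$, which is the kind of bookkeeping I would not spell out here. The key point is that $\langle w, v\rangle$ contains at least two distinct elements of the target cycle type (e.g. $wv$ and $w^{-1}v = w^3 v$), so I can set $R = \{(c_1, c_2) : c_1 c_2 = wv\}$ and $S = \{(c_1, c_2) : c_1 c_2 = w^3 v\}$, both nonempty, and $Y = R \coprod S \subseteq \hC_\ell$ by \Cref{thm:tw_hom_rack_prod}.

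For the type D inequality, take $r = (wv, e) \in R$ and $s = (w^3 v, e) \in S$ — or, if those particular tuples make the product $r \triop (s \triop (r \triop s))$ collapse back to $s$, a nearby choice such as $r = (w, v) \in R$, $s = (w^3, v) \in S$. Since the rack action fixes the second coordinate's contribution in a controlled way, the expression $r \triop (s \triop (r \triop s))$ will be a tuple whose entries are obtained from those of $s$ by conjugation by noncommuting-looking combinations; because $w$ has order $4$ rather than $2$, conjugating will genuinely move points (unlike the order-$2$ situation one has to be careful about), and I expect $r \triop (s \triop (r \triop s)) \neq s$ to fall out after a short computation. The main obstacle is therefore not the rack-theoretic machinery but the combinatorial one: pinning down a single abelian subgroup of $\Alt_n$ that simultaneously (i) sits inside $\Alt_n$ (even permutations), (ii) contains two distinct elements of exactly cycle type $(1^{r_1}, 2^{r_2}, 4^{r_4})$, and (iii) makes the chosen $r, s$ violate \Cref{eq:type_d}; getting all three at once may require splitting into a couple of parity cases on $r_2$ and $r_4$, but each case should be handled by the same template.
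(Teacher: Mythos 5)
There is a genuine gap, and it is not just a matter of unfinished bookkeeping: the abelian template from \Cref{prop:id_12r} cannot work when $t = 2$. With $\theta = \id$ and $t = 2$ the rack operation is $a \triop b = (a_1 b_2 a_2^{-1},\, a_2 b_1 a_1^{-1})$, so if all entries lie in an abelian subgroup $A$ and we set $d_a = a_1 a_2^{-1}$, then $a \triop b = (d_a b_2,\, d_a^{-1} b_1)$ and a direct computation gives
\begin{equation*}
    r \triop \bigl(s \triop (r \triop s)\bigr)
    = \bigl(d_r^{2} d_s^{-1} s_2,\; d_r^{-2} d_s s_1\bigr),
\end{equation*}
which equals $s$ if and only if $d_r^{2} = d_s^{2}$. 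In your group $A = \langle w, v\rangle \cong C_4 \times C_2$, any $r$ with $r_1 r_2 = wv$ satisfies $d_r = (wv) r_2^{-2}$, hence $d_r^{2} = w^{2}$, and likewise any $s$ with $s_1 s_2 = w^{3}v$ has $d_s^{2} = w^{2}$. So \Cref{eq:type_d} fails for \emph{every} choice of $r \in R$ and $s \in S$, including both of the pairs you propose; the heuristic that ``$w$ has order $4$ so conjugation will move points'' does not apply, because inside an abelian subgroup all the conjugations occurring in the rack operation are trivial. This is exactly the same obstruction that forces $t \geq 3$ in \Cref{prop:id_12r} (there every $d^2 = e$), and it cannot be repaired by enlarging $A$ in the minimal cases: for $n = 6$ and $\ell$ of type $(2,4)$, the centralizer of $\ell$ in $\Alt_6$ is cyclic of order $4$, so no abelian subgroup of $\Alt_6$ contains two elements of that cycle type with distinct squares.

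The paper's proof avoids this by taking a \emph{non-abelian} subgroup $G = \langle x, y\rangle$ with $x = (1\ 2)(3\ 4\ 5\ 6)$ and $y = (1\ 2\ 3\ 6)(4\ 5)$, together with a homomorphism $\varphi \colon G \rightarrow C_4$ coming from the conjugation action on $\{(1\ 3\ 5), (1\ 5\ 3), (2\ 4\ 6), (2\ 6\ 4)\}$, and defines $R$ and $S$ by the $\varphi$-image of the product of the pair (the two images being the two distinct generators of $C_4$) rather than by the product itself. Since only the image of the product is fixed, the entries of elements of $R$ and $S$ need not commute, the conjugation part of the rack operation is genuinely nontrivial, and an explicit computation with $(x\pi, e)$ and $(y\pi, e)$ verifies \Cref{eq:type_d}. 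Your construction would need to be replaced by something of this kind; as written, the key inequality can never be satisfied.
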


\begin{proof}
    Let $x = (1\ 2)(3\ 4\ 5\ 6)$ and $y = (1\ 2\ 3\ 6)(4\ 5)$,
    and let $G$ be the subgroup of $\Alt_n$ generated by $x$ and $y$.
    The action of $G$ on the set
    $S = \{ (1\ 3\ 5),\allowbreak
    (1\ 5\ 3),\allowbreak
    (2\ 4\ 6),\allowbreak
    (2\ 6\ 4) \}$
    by conjugation
    induces a homomorphism $\varphi$
    into the cyclic group $C_4$,
    with $x$ and $y$ mapping to distinct generators.
    Indeed, the action of $x$ is given by the cycle
    $((1\ 3\ 5)\allowbreak\ 
    (2\ 4\ 6)\allowbreak\ 
    (1\ 5\ 3)\allowbreak\ 
    (2\ 6\ 4))$,
    and the action of $y$ is given by the cycle
    $((1\ 3\ 5)\allowbreak\ 
    (2\ 6\ 4)\allowbreak\ 
    (1\ 5\ 3)\allowbreak\ 
    (2\ 4\ 6))$.

    Let $\pi$ be an element of $\Alt_n$ acting only on numbers
    greater than $6$ and with cycle type
    $(1^{r_1}, 2^{r_2 - 1}, 4^{r_4 - 1})$.
    Now let $R$ be the set consisting of pairs $(r_1, r_2)$
    such that $r_1 \pi^{-1}$ and $r_2$ are in $G$
    and $r_1 \pi^{-1}$ and $r_2$ have product mapped
    to $\varphi(x)$ by $\varphi$.
    All elements of $\Alt_6$ which act on $S$
    in the same way $x$ does have cycle type $(2, 4)$,
    so $R$ is a subset of $\hC_\ell$
    by \Cref{thm:tw_hom_rack_prod}.
    Also, the rack operation in $R$ is of the form
    $(g_1 \pi, h_1) \triop (g_2 \pi, h_2)
    = (g_1 h_2 h_1^{-1} \pi, h_1 g_2 g_1^{-1})$
    and $\varphi(g_1 h_2 g_2 g_1^{-1}) = \varphi(g_2 h_2)$,
    so $R$ is closed under the rack operation and hence is a rack.
    Similarly, let $S$ be the subrack of $\hC_\ell$
    consisting of pairs $(s_1, s_2)$
    such that $s_1 \pi^{-1}$ and $s_2$ are in $G$
    and $s_1 \pi^{-1}$ and $s_2$ have product mapping to $\varphi(y)$.

    The set $Y = R \coprod S$ is a subrack of $\hC_\ell$.
    Finally,
    \begin{align*}
        (x\pi, e) \triop (y\pi, e)
        &= (x\pi, yx^{-1}) \\
        (y\pi, e) \triop (x\pi, yx^{-1})
        &= (y^2x^{-1}\pi, xy^{-1}) \\
        (x\pi, e) \triop (y^2x^{-1}\pi, xy^{-1})
        &= (x^2y^{-1}\pi, y^2x^{-2}) \\
        &\neq (y\pi, e),
    \end{align*}
    with $(x\pi, e) \in R$ and $(y\pi, e) \in S$,
    so $\hC_\ell$ is of type D.
\end{proof}

\Cref{prop:id_124} covers all $\ell$
included within the last row of
the first section of \Cref{tab:tw_hom_a_n_type_d_solved}
satisfying $r_2 > 0$.
Because $r_4$ must be even in order for $\ell$ to be in $\Alt_n$,
the only case remaining for this row is when
$r_2 = 0$ and $r_4 > 1$,
addressed in the following proposition.

\begin{prop}\label{prop:id_14}
    For $\ell$ in $\Alt_n$ and of cycle type $(1^{r_1}, 4^{r_4})$,
    with $r_4 > 1$, the twisted homogeneous rack $\hC_\ell$
    of type $(\Alt_n, 2, \id)$ is of type D.
\end{prop}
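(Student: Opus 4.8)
The plan is to run the argument of \Cref{prop:id_124} with $\Alt_6$ replaced by $\Alt_8$ and the four $3$-cycles replaced by a block system of four pairs. First I would fix the partition $\{\{1,2\},\{3,4\},\{5,6\},\{7,8\}\}$ of $\{1,\dots,8\}$ and choose two permutations of cycle type $(4^2)$ that preserve it, for instance $x=(1\ 3\ 5\ 7)(2\ 4\ 6\ 8)$ and $y=(1\ 7\ 6\ 3)(2\ 8\ 5\ 4)$; these induce mutually inverse $4$-cycles on the four blocks, and $x^2=(1\ 5)(2\ 6)(3\ 7)(4\ 8)\neq(1\ 6)(2\ 5)(3\ 7)(4\ 8)=y^2$. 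Setting $G=\langle x,y\rangle\leq\Alt_8$, the action of $G$ on the four blocks yields a homomorphism $\varphi\colon G\to C_4$ with image $\langle\varphi(x)\rangle$ and with $\varphi(y)=\varphi(x)^{-1}$ a generator distinct from $\varphi(x)$.

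The key geometric input is the rigidity statement: every element of $\Alt_8$ preserving this block system and inducing a $4$-cycle on the four blocks has cycle type $(4^2)$. This follows because tracking the orbit of a point through the four blocks forces every cycle of such an element to have length $4$ or $8$, while an $8$-cycle is an odd permutation; hence only $(4^2)$ survives. In particular every element of $\varphi^{-1}(\varphi(x))$ and of $\varphi^{-1}(\varphi(y))$ has cycle type $(4^2)$.

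Next I would introduce a permutation $\pi\in\Alt_n$ of cycle type $(1^{r_1},4^{r_4-2})$ supported on $\{9,\dots,n\}$; this is legitimate since $r_4>1$ gives $n=r_1+4r_4\geq8$ and $r_4$ is even (else $\ell\notin\Alt_n$), so $\pi$ is even. Define
\begin{align*}
R&=\{(g_1\pi,h_1):g_1,h_1\in G,\ \varphi(h_1g_1)=\varphi(x)\},\\
S&=\{(g_1\pi,h_1):g_1,h_1\in G,\ \varphi(h_1g_1)=\varphi(y)\}.
\end{align*}
Because $\pi$ commutes with $G$ and $h_1g_1$ has cycle type $(4^2)$, the product $h_1g_1\pi$ has cycle type $(1^{r_1},4^{r_4})$, so $R,S\subseteq\hC_\ell$ by \Cref{thm:tw_hom_rack_prod}, and $R\cap S=\emptyset$ since $\varphi(x)\neq\varphi(y)$. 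For $t=2$ the rack operation reads $(g_1\pi,h_1)\triop(g_2\pi,h_2)=(g_1h_2h_1^{-1}\pi,\,h_1g_2g_1^{-1})$, and $\varphi$ of the product of this tuple equals $\varphi(h_2g_2)$, the same value as for the right operand; hence $R$, $S$, and $Y=R\coprod S$ are subracks, and $Y=R\coprod S$ is a decomposition.

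Finally, taking $r=(x\pi,e)\in R$ and $s=(y\pi,e)\in S$ and iterating the rack operation exactly as in the proof of \Cref{prop:id_124} yields
\[
r\triop\bigl(s\triop(r\triop s)\bigr)=(x^2y^{-1}\pi,\,y^2x^{-2}),
\]
whose second coordinate $y^2x^{-2}$ is nontrivial because $x^2\neq y^2$, so the result differs from $s$ and $\hC_\ell$ is of type D. The one step requiring real care is the choice of $x$ and $y$: they must have cycle type $(4^2)$, act on the four blocks as inverse $4$-cycles, and satisfy $x^2\neq y^2$ all at once. Once that — the analogue of the cycle-type rigidity lemma in \Cref{prop:id_124} — is arranged, the remainder of the proof is a direct transcription.
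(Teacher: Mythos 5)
Your proposal is correct and follows essentially the same route as the paper: the paper's proof of \Cref{prop:id_14} simply reruns the argument of \Cref{prop:id_124} with two type-$(4^2)$ permutations of $\Alt_8$ and a homomorphism $\varphi\colon G\to C_4$ sending them to distinct generators, padding by $\pi$, decomposing by the $\varphi$-value of the product, and ending with the same computation $(x^2y^{-1}\pi,\,y^2x^{-2})\neq(y\pi,e)$. The only difference is in how $\varphi$ is realized: the paper uses the conjugation action of $x=(1\ 2\ 3\ 4)(5\ 6\ 7\ 8)$ and $y=(1\ 6\ 7\ 8)(2\ 3\ 4\ 5)$ on the four $4$-cycles $(1\ 3\ 5\ 7)$, $(1\ 7\ 5\ 3)$, $(2\ 4\ 6\ 8)$, $(2\ 8\ 6\ 4)$, whereas you use the action on the block system of four pairs, and your explicit rigidity argument (all cycle lengths divisible by $4$, the $8$-cycle ruled out by parity) spells out cleanly the cycle-type claim that the paper leaves implicit.
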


\begin{proof}
    This follows from the same reasoning as \Cref{prop:id_124},
    but with $x = (1\ 2\ 3\ 4)(5\ 6\ 7\ 8)$
    and $y = (1\ 6\ 7\ 8)(2\ 3\ 4\ 5)$
    acting on the set
    $\{ (1\ 3\ 5\ 7),\allowbreak
    (1\ 7\ 5\ 3),\allowbreak
    (2\ 4\ 6\ 8),\allowbreak
    (2\ 8\ 6\ 4) \}$.
    The element $x$ acts by the cycle
    $((1\ 3\ 5\ 7)\allowbreak\ 
    (2\ 4\ 6\ 8)\allowbreak\ 
    (1\ 7\ 5\ 3)\allowbreak\ 
    (2\ 8\ 6\ 4))$,
    and the element $y$ acts by the cycle
    $((1\ 3\ 5\ 7)\allowbreak\ 
    (2\ 8\ 6\ 4)\allowbreak\ 
    (1\ 7\ 5\ 3)\allowbreak\ 
    (2\ 4\ 6\ 8))$,
    giving the needed homomorphism $\varphi \colon G \rightarrow C_4$.
\end{proof}

\subsection{Automorphism by Conjugation by a Transposition}
\label{sec:iota}

Recall that the automorphism of $\Alt_n$
acting by conjugation by $(1\ 2)$
is denoted $\iota_{(1\ 2)}$.
In this section, we prove that
the twisted homogeneous racks in
the second section of \Cref{tab:tw_hom_a_n_type_d_solved}
are of type D.
We begin with the case when $\ell(1\ 2)$ is a transposition.

\begin{prop}\label{prop:iota_12}
    For even $t$ and $n \geq 5$,
    the twisted homogeneous rack $\hC_e$
    of type $(\Alt_n, t, \iota_{(1\ 2)})$
    is of type D.
\end{prop}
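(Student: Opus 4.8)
The plan is to mimic the proof of \Cref{prop:id_1}, now with the twisted rack operation. I would work with the same subgroup
$G = \{e,\, (1\ 2\ 3),\, (1\ 3\ 2),\, (1\ 2)(4\ 5),\, (2\ 3)(4\ 5),\, (1\ 3)(4\ 5)\} \cong \Sym_3$
of $\Alt_n$ (which exists since $n \geq 5$), with $\varphi\colon G \to \Sym_3$ the isomorphism given by restricting the action to $\{1,2,3\}$. The key new observation is that $(1\ 2)$ normalizes $G$ while $(4\ 5)$ centralizes it, so $\theta = \iota_{(1\ 2)}$ restricts to the \emph{inner} automorphism $\iota_w$ of $G$, where $w = (1\ 2)(4\ 5) \in G$ and $\varphi(w) = (1\ 2)$. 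In particular $\theta(G) = G$, and, acting on $G$ by conjugation, $\theta$ preserves cycle types inside $G$; hence it fixes setwise both the even part $\varphi^{-1}(\Alt_3) = \{e, (1\ 2\ 3), (1\ 3\ 2)\}$ and its complement $\{(1\ 2)(4\ 5), (2\ 3)(4\ 5), (1\ 3)(4\ 5)\}$ in $G$. One also checks directly that $\OO_e^{G,\theta|_G} = \varphi^{-1}(\Alt_3)$, and since a twisted conjugacy class over a $\theta$-stable subgroup is contained in the class over the ambient group, this gives $\varphi^{-1}(\Alt_3) \subseteq \OO_e^{\Alt_n,\theta}$.

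Next I would let $R$ be the set of all $t$-tuples with coordinates in $\{e, (1\ 2\ 3), (1\ 3\ 2)\}$ and $S$ the set of all $t$-tuples with coordinates in $\{(1\ 2)(4\ 5), (2\ 3)(4\ 5), (1\ 3)(4\ 5)\}$. For a tuple in $R$ the product of its coordinates lies in the subgroup $\varphi^{-1}(\Alt_3)$; for a tuple in $S$ that product maps under $\varphi$ to a product of $t$ transpositions of $\Sym_3$, which is even because $t$ is even, so it too lies in $\varphi^{-1}(\Alt_3)$. As $\varphi^{-1}(\Alt_3) \subseteq \OO_e^{\Alt_n,\theta}$, \Cref{thm:tw_hom_rack_prod} gives $R \cup S \subseteq \hC_e$. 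Using the formula $a \triop b = (a_1 \theta(b_t)\theta(a_t)^{-1},\ a_2 b_1 a_1^{-1},\ \dots,\ a_t b_{t-1} a_{t-1}^{-1})$ together with $\theta(G) = G$ and the fact that $\theta$ preserves the two parity classes of $G$, one checks that for $a, b \in R \cup S$ the tuple $a \triop b$ lies in $G^t$ with all coordinates in $\varphi^{-1}(\Alt_3)$ if $b \in R$ and all coordinates odd under $\varphi$ if $b \in S$. Hence $Y := R \cup S$ is a subrack of $\hC_e$, $\varphi_a(R) = R$ and $\varphi_a(S) = S$ for every $a \in Y$ (as $\varphi_a$ restricts to a bijection of $Y$), and $Y = R \coprod S$ is a decomposition; both parts are nonempty, e.g.\ $(e,\dots,e) \in R$ and $\bigl((1\ 2)(4\ 5),\dots,(1\ 2)(4\ 5)\bigr) \in S$.

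Finally I would check \Cref{eq:type_d} with $r = (e,\dots,e) \in R$ and $s = \bigl((2\ 3)(4\ 5),\, (1\ 2)(4\ 5),\, \dots,\, (1\ 2)(4\ 5)\bigr) \in S$. Since $r \triop b = (\theta(b_t), b_1, \dots, b_{t-1})$, one first computes $q := s \triop (r \triop s)$ and then $r \triop q = (\theta(q_t), q_1, \dots, q_{t-1})$, so the second coordinate of $r \triop (s \triop (r \triop s))$ equals $q_1 = s_1\,\theta(s_{t-1})\,\theta(s_t)^{-1}$. A brief computation gives $q_1 = (2\ 3)(4\ 5)$ when $t \geq 3$ and $q_1 = (1\ 3)(4\ 5)$ when $t = 2$; in either case this differs from $(1\ 2)(4\ 5)$, the second coordinate of $s$. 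Therefore $r \triop (s \triop (r \triop s)) \neq s$, and $\hC_e$ is of type D.

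The step I expect to be the main obstacle is the middle one: confirming that $R$ and $S$ really are subracks of $\hC_e$ and that $Y = R \coprod S$ is an honest decomposition. Everything there rests on the two structural facts that $(1\ 2)$ normalizes $G$ (so $\theta$ cannot carry tuples out of $G^t$) and that $\theta|_G$ is conjugation by an element of $G$ (so it fixes the two parity classes and the twisted class $\OO_e^{G,\theta|_G} = \varphi^{-1}(\Alt_3)$), together with the parity hypothesis on $t$, which is precisely what forces the coordinate-products of $S$-tuples into $\varphi^{-1}(\Alt_3) \subseteq \OO_e^{\Alt_n,\theta}$. Once the decomposition is in hand, verifying \Cref{eq:type_d} is a routine finite computation.
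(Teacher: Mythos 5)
Your proposal is correct and follows essentially the same route as the paper: the same subgroup $G \cong \Sym_3$, the same decomposition of $Y$ into tuples with all coordinates even versus all odd under $\varphi$ (using that $t$ is even so coordinate products land in $\varphi^{-1}(\Alt_3) \subseteq \OO_e^{\Alt_n,\theta}$), and a final explicit check of \Cref{eq:type_d}. The only differences are cosmetic: you justify $\varphi^{-1}(\Alt_3) \subseteq \OO_e^{\Alt_n,\theta}$ via $\theta|_G = \iota_{(1\,2)(4\,5)}$ rather than asserting it, and you use the witness $s = ((2\ 3)(4\ 5),(1\ 2)(4\ 5),\dots)$ instead of the paper's constant tuple $((1\ 3)(4\ 5),\dots,(1\ 3)(4\ 5))$; both computations are valid.
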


\begin{proof}
    Let $G = \{ e, (1\ 2\ 3), (1\ 3\ 2),
    (1\ 2)(4\ 5), (2\ 3)(4\ 5), (1\ 3)(4\ 5) \}$.
    As observed in the proof of \Cref{prop:id_1},
    the group $G$ is isomorphic to $\Sym_3$,
    with the isomorphism $\varphi$ given by
    considering the action of $G$ only on $\{1, 2, 3\}$.

    Let $R$ be the set of $t$-tuples of elements $g$ of $G$
    with $\varphi(g)$ an even permutation,
    let $S$ be the set of $t$-tuples of elements $g$ of $G$
    with $\varphi(g)$ an odd permutation,
    and let $Y = R \cup S$.
    Because each element of $G$ with even image under $\varphi$
    is in $\OO_e^{\Alt_n, \iota_{(1\ 2)}}$ and $t$ is even,
    $Y$ is a subset of $\hC_e$
    by \Cref{thm:tw_hom_rack_prod}.
    Moreover, for any two elements $a = (a_1, \dots, a_t)$
    and $b = (b_1, \dots, b_t)$
    of $Y$,
    \begin{align*}
        a \triop b &= (a_1, \dots, a_t) \triop (b_1, \dots, b_t) \\
        &= \left( a_1 (1\ 2) b_t a_t^{-1} (1\ 2),
        a_2 b_1 a_1^{-1}, \dots, a_t b_{t - 1} a_{t - 1}^{-1} \right),
    \end{align*}
    each element of which has the same parity
    upon mapping through $\varphi$
    as the corresponding $b_i$.
    Therefore, $Y$, $R$, and $S$ are racks,
    and $Y = R \coprod S$.

    Finally, let $r = e$ and $s = ((1\ 3)(4\ 5), \dots, (1\ 3)(4\ 5))$
    in $\hC_e$.
    Then
    \begin{align*}
        r \triop (s \triop (r \triop s)) &=
        r \triop (s \triop
        ((2\ 3)(4\ 5), (1\ 3)(4\ 5), \dots) \\
        &= r \triop ((1\ 3)(4\ 5), (1\ 2)(4\ 5), \dots) \\
        &= \begin{cases}
            ((1\ 2)(4\ 5), (1\ 3)(4\ 5)), &\text{if } t = 2 \\
            ((2\ 3)(4\ 5), (1\ 3)(4\ 5), (1\ 2)(4\ 5),
            (1\ 3)(4\ 5), \dots) &\text{if } t > 2
        \end{cases} \\
        &\neq s,
    \end{align*}
    so $\hC_\ell$ is of type D, as claimed.
\end{proof}

We now address the case when $\ell(1\ 2)$ is a product of
a larger number of transpositions,
beginning with the case $t = 2$.

\begin{prop}\label{prop:iota_222}
    For $\ell(1\ 2)$ in $\Sym_n$
    and of type $(2^3)$,
    the twisted homogeneous rack $\hC_\ell$
    of type $(\Alt_n, 2, \iota_{(1\ 2)})$ is of type D.
\end{prop}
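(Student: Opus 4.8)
The plan is to follow the template already used for \Cref{prop:iota_12} and \Cref{prop:id_124}: exhibit inside $\hC_\ell$ a decomposable subrack built from a small, $\iota_{(1\ 2)}$-stable subgroup of $\Alt_n$, and then pin down two explicit elements witnessing \Cref{eq:type_d}. Since $t=2$, the rack operation on pairs is, by the formula derived in the proof of \Cref{prop:iota_12},
\[
(a_1,a_2)\triop(b_1,b_2)=\bigl(a_1(1\ 2)\,b_2a_2^{-1}\,(1\ 2),\ a_2b_1a_1^{-1}\bigr),
\]
and by \Cref{thm:tw_hom_rack_prod} a pair $(x_1,x_2)$ lies in $\hC_\ell$ exactly when $x_2x_1\in\OO_\ell^{\Alt_n,\iota_{(1\ 2)}}$. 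The first thing I would record is the standard reformulation of this twisted class: since $y\rightharpoonup_{\iota_{(1\ 2)}}x=yx(1\ 2)y^{-1}(1\ 2)$, right multiplication by $(1\ 2)$ is a bijection carrying $\OO_\ell^{\Alt_n,\iota_{(1\ 2)}}$ onto the $\Alt_n$-conjugation orbit of $\ell(1\ 2)$ inside $\Sym_n$; as $\ell(1\ 2)$ has cycle type $(2^3)$, whose parts are not all distinct, that orbit is the whole $\Sym_n$-class of type $(2^3)$. Hence $(x_1,x_2)\in\hC_\ell$ precisely when $x_2x_1(1\ 2)$ has cycle type $(2^3)$; for permutations supported on six points this forces $x_2x_1$ to have cycle type $(2^2)$ or $(4,2)$, suitably placed relative to $\{1,2\}$.

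Next I would choose an $\iota_{(1\ 2)}$-stable subgroup $G\leq\Alt_n$ with support in $\{1,\dots,6\}$, together with a homomorphism $\varphi\colon G\to A$ onto a small abelian group satisfying $\varphi\circ\iota_{(1\ 2)}=\varphi$ on $G$ — which holds as soon as $\iota_{(1\ 2)}$ acts on $G$ by an inner automorphism of $G$, because $A$ is abelian. The subgroup is to be arranged so that, for two distinct values $\alpha,\beta\in A$, every product $g_2g_1$ of elements of $G$ with $\varphi(g_2g_1)\in\{\alpha,\beta\}$ has cycle type $(2^2)$ or $(4,2)$ placed so that $g_2g_1(1\ 2)$ is of type $(2^3)$. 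Then set
\[
R=\{(g_1,g_2):g_1,g_2\in G,\ \varphi(g_2g_1)=\alpha\},\qquad
S=\{(g_1,g_2):g_1,g_2\in G,\ \varphi(g_2g_1)=\beta\},
\]
and $Y=R\cup S$. A short computation with the displayed operation shows that if $a\triop b=(c_1,c_2)$ then $\varphi(c_2c_1)=\varphi(b_2b_1)$, using $\varphi\circ\iota_{(1\ 2)}=\varphi$ and the commutativity of $A$; thus $\varphi$ of the coordinate product is constant along $Y$-orbits. Consequently $Y$, $R$, and $S$ are subracks of $\hC_\ell$, $R\cap S=\emptyset$ because $\alpha\neq\beta$, and $Y=R\coprod S$ is a decomposable subrack.

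The last step is to produce $r\in R$ and $s\in S$ violating \Cref{eq:type_d}. Guided by the earlier proofs I would try pairs with one trivial coordinate, compute $r\triop s$, then $s\triop(r\triop s)$, then $r\triop(s\triop(r\triop s))$ using the displayed formula, and check that the result differs from $s$ in at least one coordinate. This is a bounded computation in $\Sym_6$.

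I expect the crux to be the simultaneous choice of $G$, $\varphi$, and the witnesses: one must keep every realizable product $g_2g_1$ inside the single twisted class of $\ell$ (cycle type $(2^3)$), keep $R$ and $S$ nonempty, and still retain enough non-commutativity that \Cref{eq:type_d} genuinely holds. The tension is real, since the obvious candidates for $G$ — abelian $2$-groups on $\{3,4,5,6\}$, or the $\Sym_3$-type subgroup of \Cref{prop:iota_12} — either cannot produce products of type $(2^3)$ or are close enough to abelian that $r\triop(s\triop(r\triop s))=s$ for the natural witnesses. Settling on a workable $G$ (plausibly a copy of $\Sym_4$ or a Sylow $2$-subgroup of $\Sym_6$ on which $\iota_{(1\ 2)}$ is inner) together with the matching witnesses, presumably found by a short search, is the substance of the argument; the rest is bookkeeping.
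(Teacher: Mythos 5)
Your setup is sound: the $t=2$ operation, the reduction of membership via \Cref{thm:tw_hom_rack_prod} to ``$x_2x_1(1\ 2)$ has cycle type $(2^3)$'' (correct, since the centralizer of a $(2^3)$-element contains odd permutations, so its $\Sym_6$-class is a single $\Alt_6$-orbit), and the observation that invariance of a $\varphi$-value of the coordinate product would yield a decomposable subrack. But the proof stops exactly where the content is: you never exhibit the subgroup $G$, the homomorphism $\varphi$, the values $\alpha,\beta$, or the witnesses, and you concede that finding them ``is the substance of the argument.'' Worse, the specific template you commit to --- $R$ and $S$ equal to \emph{full fibers} $\{(g_1,g_2)\in G^2:\varphi(g_2g_1)=\alpha\}$, resp.\ $\beta$ --- looks unrealizable here. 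Since $g_1=e$ is allowed, the products $g_2g_1$ sweep out the entire fibers, so you need two whole cosets of $\ker\varphi$ (with $G/\ker\varphi$ abelian) consisting only of elements $g$ for which $g(1\ 2)$ has type $(2^3)$; in $\Alt_6$ these ``good'' elements are just the three $(2^2)$-involutions on $\{3,4,5,6\}$ and twelve elements of type $(2,4)$. For the natural $\iota_{(1\ 2)}$-stable candidates this fails: for the Klein group or any abelian choice either only one good element is available or the computation you yourself anticipate gives $r\triop(s\triop(r\triop s))=s$ identically; for the dihedral group $\langle(1\ 2)(3\ 4\ 5\ 6),(3\ 4)(5\ 6)\rangle$ and for the $\Sym_4$-type subgroup preserving $\{1,2\}$, the commutator subgroup forces the fibers to be cosets of which only \emph{one} lies in the good set (the coset containing $e$ is always bad). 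So the plan as stated does not close, and no alternative construction is supplied.

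For comparison, the paper's proof does not use fibers of a homomorphism at all: it takes $R$ and $S$ to be twelve-element ``graphs'' over the alternating group on $\{3,4,5,6\}$, assigning to each $\pi$ a single partner built from $x=(3\ 4)(5\ 6)$ (namely $x\pi^{-1}x$ or $x\pi$ according to whether $\pi^2=e$), and similarly from $y=(3\ 6)(4\ 5)$ for $S$; the elements are listed in \Cref{tab:iota_222}, closure of $R$, $S$ and $R\coprod S$ is checked directly, and the explicit witnesses $r=((3\ 4)(5\ 6),e)$, $s=((3\ 4\ 5),(4\ 5\ 6))$ verify \Cref{eq:type_d}. To repair your write-up you would need either to produce such an explicit decomposable subrack with witnesses, or to prove that some concrete $(G,\varphi,\alpha,\beta)$ satisfies all your constraints simultaneously --- neither of which the current text does.
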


\begin{proof}
    Let $R$ and $S$ be the subracks of $\hC_\ell$
    defined by \Cref{tab:iota_222},
    with each row corresponding to one element of each rack.
    Each row determines an element of each rack;
    the first column contains the first permutation of the pair,
    and the column corresponding to the rack (either $R$ or $S$)
    contains the second permutation of the pair.

    The racks $R$ and $S$ can also be described as follows.
    Let $x = (3\ 4)(5\ 6)$ and $y = (3\ 6)(4\ 5)$.
    For each element $\pi$ of the alternating group
    acting on $\{3, 4, 5, 6\}$,
    let the element $(\pi, x \pi^{-1} x)$
    be in $R$ if $x$ is not an involution,
    and let the element $(\pi, x \pi)$
    be in $R$ if $x$ is an involution.
    The elements of $S$ are characterized similarly using $y$.
    
    The sets $R$ and $S$ so defined
    can be checked to be racks
    with $Y = R \coprod S$ also a rack.
    Moreover, setting $r = ((3\ 4)(5\ 6), e)$
    and $s = ((3\ 4\ 5), (4\ 5\ 6))$,
    we obtain
    \begin{align*}
        r \triop (s \triop (r \triop s)) &=
        r \triop (s \triop ((3\ 4\ 6), (3\ 5\ 6))) \\
        &= r \triop (e, (3\ 6)(4\ 5)) \\
        &= ((3\ 5)(4\ 6), (3\ 4)(5\ 6)) \\
        &\neq s,
    \end{align*}
    so $\hC_\ell$ is of type D.
\end{proof}

\begin{table}[t]
    \begin{center}
        \begin{tabular}{|c|c|c|}
            \hline
            First permutation & \multicolumn{2}{c|}{Second permutation}
            \\\hline
            & $R$ & $S$ \\\hline
            $e$ & $(3\ 4)(5\ 6)$ & $(3\ 6)(4\ 5)$ \\\hline
            $(4\ 5\ 6)$ & $(3\ 5\ 6)$ & $(3\ 4\ 5)$ \\\hline
            $(4\ 6\ 5)$ & $(3\ 6\ 5)$ & $(3\ 5\ 4)$ \\\hline
            $(3\ 4)(5\ 6)$ & $e$ & $(3\ 5)(4\ 6)$ \\\hline
            $(3\ 4\ 5)$ & $(3\ 4\ 6)$ & $(4\ 5\ 6)$ \\\hline
            $(3\ 4\ 6)$ & $(3\ 4\ 5)$ & $(3\ 5\ 6)$ \\\hline
            $(3\ 5\ 4)$ & $(3\ 6\ 4)$ & $(4\ 6\ 5)$ \\\hline
            $(3\ 5\ 6)$ & $(4\ 5\ 6)$ & $(3\ 4\ 6)$ \\\hline
            $(3\ 5)(4\ 6)$ & $(3\ 6)(4\ 5)$ & $(3\ 4)(5\ 6)$ \\\hline
            $(3\ 6\ 4)$ & $(3\ 5\ 4)$ & $(3\ 6\ 5)$ \\\hline
            $(3\ 6\ 5)$ & $(4\ 6\ 5)$ & $(3\ 6\ 4)$ \\\hline
            $(3\ 6)(4\ 5)$ & $(3\ 5)(4\ 6)$ & $e$ \\\hline
        \end{tabular}
        \caption{Racks $R$ and $S$ used in the proof
            of \Cref{prop:iota_222}.
            Each row describes an element of $R$
            and an element of $S$:
            if the permutations, reading across,
            are $\pi$, $\sigma$, and $\tau$,
            then $(\pi, \sigma) \in R$ and $(\pi, \tau) \in S$.}
        \label{tab:iota_222}
    \end{center}
\end{table}

We now address the case $t \geq 3$.

\begin{prop}\label{prop:iota_12r}
    For $\ell (1\ 2)$ in $\Sym_n$
    and of type $(1^{r_1}, 2^{r_2})$,
    with $r_2$ odd and greater than $1$,
    the twisted homogeneous rack $\hC_\ell$
    of type $(\Alt_n, t, \iota_{(1\ 2)})$,
    with $t \geq 3$,
    is of type D.
\end{prop}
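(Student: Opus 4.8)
The plan is to adapt the strategy that worked in \Cref{prop:id_12r}, namely to embed a Klein four-group into the "moved" part of $\ell(1\ 2)$ and build $R$ and $S$ as sets of $t$-tuples whose products lie in two distinct cosets. The obstacle here is that $\theta = \iota_{(1\ 2)} \neq \id$, so the rack operation $a \triop_f b$ twists the first coordinate by $(1\ 2)$, and the product condition from \Cref{thm:tw_hom_rack_prod} involves $\ell(1\ 2)$ rather than $\ell$. The key idea is to choose the Klein four-group to commute with $(1\ 2)$. Since $r_2 \geq 3$ is odd, write $r_2 = 2m + 1$; I would take $x$, $y$, $z$ to be the three nontrivial elements of a Klein four-group supported on a $4$-element block among the $2m$ transpositions of $\ell(1\ 2)$ beyond the first, exactly as in \Cref{prop:id_12r} but shifted so as not to touch $\{1,2\}$ — e.g. using points $3,4,5,6$ and the remaining $2(m-1)$ transposition points, together with a fixed involution $w$ that accounts for the single leftover transposition. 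One then arranges that the product $\ell(1\ 2)$ is realized by a tuple of the form $(w \cdot (\text{element of }\{e,x,y,z\}), c_2, \dots, c_t)$ with all $c_i \in \{e,x,y,z\}$, and that conjugation by $(1\ 2)$ acts trivially on $w,x,y,z$.

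The steps, in order: (1) Fix the involution $w$ (a single transposition on fresh points commuting with everything) and the Klein four-group $\{e,x,y,z\}$ on fresh points, all commuting with $(1\ 2)$, so that $w x$ has cycle type $(1^{r_1}, 2^{r_2})$ and hence $\ell$ is conjugate to $w x (1\ 2)$; replace $\ell$ by this representative. (2) Define $Y$ to be the set of $t$-tuples $(c_1, \dots, c_t)$ with $c_1 \in \{w, wx, wy, wz\}$ and $c_2, \dots, c_t \in \{e, x, y, z\}$ such that $c_t \cdots c_1 \in \{wx, wy\}$; by \Cref{thm:tw_hom_rack_prod}, and since $w x (1\ 2)$ and $w y (1\ 2)$ are conjugate in $\Sym_n$ hence $wy \in \OO_\ell^{\Alt_n, \iota_{(1\ 2)}}$, $Y \subseteq \hC_\ell$. (3) Check that the twisted rack operation $a \triop_f b = (a_1 (1\ 2) b_t a_t^{-1} (1\ 2),\, a_2 b_1 a_1^{-1},\, \dots,\, a_t b_{t-1} a_{t-1}^{-1})$ preserves membership in $Y$: because $(1\ 2)$ commutes with all of $w, x, y, z$ and the group they generate is abelian, the first coordinate stays in $\{w, wx, wy, wz\}$, the later coordinates stay in $\{e,x,y,z\}$, and the telescoping product computation (as in \Cref{prop:id_1} and \Cref{prop:id_12r}) shows the product $c_t \cdots c_1$ is unchanged. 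Splitting $Y$ according to whether this product is $wx$ or $wy$ gives $R$ and $S$, both nonempty (take $c_2 = \cdots = c_t = e$ and $c_1 = wx$ or $wy$), so $Y = R \coprod S$ is a decomposable subrack.

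For the inequality \eqref{eq:type_d}, I would take $r = (wx, e, \dots, e) \in R$ and $s = (wy, e, \dots, e) \in S$ and compute, using that $(1\ 2)$ is central among $\{w,x,y,z\}$ so the twist in the first coordinate is invisible and $(wx)(1\ 2)(wy)(1\ 2)^{-1}(wx)^{-1} = (wx)(wy)(wx)^{-1} = wz$, etc. One gets $r \triop s = (wx \cdot wy \cdot wx,\, x \cdot wy \cdot (wx)^{-1}? , \dots)$ — more carefully, $r \triop s = (w z,\, wx\, (wy)^{-1}?, \dots)$; the honest computation, mirroring \Cref{prop:id_12r}, yields $s \triop (r \triop s) = (wy, z, z, \dots, z)$-type tuple and then $r \triop (s \triop (r \triop s))$ equal to $(wy, z, \dots, z)$ if $t = 3$ or $(wx, z, z, \dots, z)$ if $t \geq 4$, neither of which equals $s = (wy, e, \dots, e)$ since $z \neq e$ and (when $t = 3$) the configuration still differs from $s$ in the later coordinates. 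The main obstacle is purely bookkeeping: verifying that the specific points chosen for $w$ and the Klein four-group, after conjugating $\ell(1\ 2)$ to the standard representative, genuinely commute with $(1\ 2)$ and realize the correct cycle type, and that $R$ and $S$ really are disjoint — which holds because $wx \neq wy$ forces the product-value to distinguish the two racks. I expect no new phenomenon beyond what appears in \Cref{prop:id_12r}; the odd parity of $r_2$ is exactly what lets the leftover transposition be absorbed into the central involution $w$.
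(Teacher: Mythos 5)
There is a genuine gap, and it sits exactly where you park the ``leftover'' transposition. Your tuples have first entry in $\{w, wx, wy, wz\}$ and product $wx$ or $wy$, where $w$ is a single transposition and $x,y,z$ are products of $r_2-1$ transpositions; since $r_2$ is odd, the elements $w$, $wx$, $wy$, $wz$ are all \emph{odd} permutations, so they are not in $\Alt_n$ at all, and a product of entries of a tuple in $(\Alt_n)^t$ can never equal $wx$ or $wy$, nor can $wx,wy$ lie in $\OO_\ell^{\Alt_n,\iota_{(1\ 2)}}\subseteq\Alt_n$. The underlying confusion is between $\ell$ and $\ell(1\ 2)$: by \Cref{thm:tw_hom_rack_prod} the product $c_t\cdots c_1$ must lie in the twisted class of $\ell$, i.e.\ it is $c_t\cdots c_1\,(1\ 2)$ --- not $c_t\cdots c_1$ itself --- that must have cycle type $(1^{r_1},2^{r_2})$. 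The transposition $(1\ 2)$ supplied by the twist is already the odd, ``leftover'' $2$-cycle, so no auxiliary involution $w$ is needed or permitted. A second, independent problem: step (1) asks for a representative with $\ell(1\ 2)$ of type $(1^{r_1},2^{r_2})$ supported away from $\{1,2\}$; such an element has at least two fixed points, so this representative does not exist when $r_1\leq 1$, which covers the outstanding cases $(1,2^3)$ with $n=7$ and $(2^5)$ with $n=10$ in \Cref{tab:tw_hom_a_n_type_d_solved}.

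The repair is the paper's argument, and it is simpler than your setup: drop $w$ entirely, take all tuple entries in a Klein four-group $\{e,x,y,z\}$ supported on $\{3,\dots,n\}$ with each nontrivial element of type $(2^{r_2-1})$ (this fits for every $r_1\geq 0$, since $n-2\geq 2(r_2-1)$), and require the product to be $x$ or $y$. Then the product times $(1\ 2)$ has type $(1^{r_1},2^{r_2})$ and is $\Alt_n$-conjugate to $\ell(1\ 2)$ (the $\Sym_n$-class does not split, as its centralizer contains odd permutations), so these tuples lie in $\hC_\ell$; moreover $\iota_{(1\ 2)}$ fixes every element of $\Alt_{\{3,\dots,n\}}$, so on this subset the twisted rack operation coincides with the untwisted one and one is looking at the subrack $\hC'_{\ell'}$ of type $(\Alt_{\{3,\dots,n\}},t,\id)$ with $\ell'$ of type $(1^{r_1+2},2^{r_2-1})$, which is of type D by \Cref{prop:id_12r} since $r_2-1\geq 2$ is even and $t\geq 3$. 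Your closure, telescoping, and inequality computations are modeled correctly on \Cref{prop:id_12r} and would go through unchanged once the membership issue is fixed in this way.
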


\begin{proof}
    Because $\iota_{(1\ 2)}$ leaves permutations
    which act trivially on $\{1, 2\}$ fixed,
    the twisted homogeneous rack
    $\hC'_{\ell'}$ of type $(\Alt_{\{3, \dots, n\}}, t, \id)$
    is a subrack of $\hC_\ell$
    for $\ell'$ of cycle type $(1^{r_1 + 2}, 2^{r_2 - 1})$
    and leaving $1$ and $2$ fixed.
    Thus, the claim follows from \Cref{prop:id_12r}.
\end{proof}

\subsection{Proof of Main Result}

In this section, we use the results of
\Cref{sec:id} and \Cref{sec:iota} to prove \Cref{thm:main}.

\begin{proof}[Proof of \Cref{thm:main}]
    All cases not listed in
    \Cref{tab:tw_hom_a_n_type_d_solved} or
    \Cref{tab:tw_hom_a_n_type_d_unknown}
    are resolved
    by~\cite{andruskiewitsch2010twisted}*{Theorem~1.2}.
    The case listed in the first section
    of \Cref{tab:tw_hom_a_n_type_d_solved}
    with $\ell = e$ is resolved by \Cref{prop:id_1},
    the cases in this section
    with $\ell$ an involution
    are resolved by \Cref{prop:id_12r},
    and the last case listed in this section
    is resolved by \Cref{prop:id_124} and \Cref{prop:id_14}.
    The first two cases listed in the second section
    of \Cref{tab:tw_hom_a_n_type_d_solved}
    are resolved by \Cref{prop:iota_12},
    the case with $\ell (1\ 2)$ of type $(2^3)$
    is resolved by \Cref{prop:iota_222},
    and the last three cases listed in
    \Cref{tab:tw_hom_a_n_type_d_solved}
    are resolved by \Cref{prop:iota_12r}.
    The results which show each case to be of type D
    are also listed in \Cref{tab:tw_hom_a_n_type_d_solved}.
    
    This leaves only the racks listed in
    \Cref{tab:tw_hom_a_n_type_d_unknown}
    not known to be of type D.
    Therefore, any twisted homogeneous rack over 
    a simple alternating group that is not of type D
    is in the list given in the statement of \Cref{thm:main}.
\end{proof}

\section*{Acknowledgements}

The author would like to thank
Dr.~Julia Plavnik and Dr.~H\'ector Pe\~na Pollastri
for their mentorship,
and the MIT PRIMES research program
for making this work possible.
The author would also like to thank Dr.~Leandro Vendramin
for his valuable advice
on an earlier version of this manuscript,
and the Reviewer
for their thoughtful comments and suggestions.

\begin{bibdiv}
\begin{biblist}

\bib{ANDRUSKIEWITSCH201536}{article}{
    author={Andruskiewitsch, Nicol{\'a}s},
    author={Carnovale, Giovanna},
    author={{Ga}rc{\'\i}a, Gast{\'o}n~Andr{\'e}s},
    title={Finite-dimensional pointed {H}opf algebras over finite simple groups of {L}ie type I. {N}on-semisimple classes in $\operatorname{PSL}_n(q)$},
    journal={Journal of Algebra},
    volume={442},
    pages={36--65},
    year={2015},
    issn={0021-8693},
}

\bib{andruskiewitsch2010twisted}{article}{
      author={Andruskiewitsch, Nicol{\'a}s},
      author={Fantino, Fernando},
      author={{Ga}rc{\'\i}a, Gast{\'o}n~Andr{\'e}s},
      author={Vendramin, Leandro},
       title={On twisted homogeneous racks of type {D}},
        date={2010},
     journal={Revista de la Uni\'on Matem\'atica Argentina},
      volume={51},
      number={2},
       pages={1\ndash 16},
}

\bib{andruskiewitsch2011nichols}{article}{
      author={Andruskiewitsch, Nicol{\'a}s},
      author={Fantino, Fernando},
      author={{Ga}rc{\'\i}a, Gast{\'o}n~Andr{\'e}s},
      author={Vendramin, Leandro},
       title={On {N}ichols algebras associated to simple racks},
        date={2011},
     journal={Contemp. Math},
      volume={537},
       pages={31\ndash 56},
}

\bib{andruskiewitsch2011finite}{article}{
      author={Andruskiewitsch, Nicol{\'a}s},
      author={Fantino, Fernando},
      author={Gra{\~n}a, Mat{\'\i}as},
      author={Vendramin, Leandro},
       title={Finite-dimensional pointed {H}opf algebras with alternating
  groups are trivial},
        date={2011},
     journal={Annali di Matematica Pura ed Applicata},
      volume={190},
      number={2},
       pages={225\ndash 245},
}

\bib{ANDRUSKIEWITSCH2011305}{article}{
      author={Andruskiewitsch, Nicol{\'a}s},
      author={Fantino, Fernando},
      author={Gra{\~n}a, Mat{\'\i}as},
      author={Vendramin, Leandro},
       title={Pointed {H}opf algebras over the sporadic simple groups},
        date={2011},
        ISSN={0021-8693},
     journal={Journal of Algebra},
      volume={325},
      number={1},
       pages={305\ndash 320},
  url={https://www.sciencedirect.com/science/article/pii/S0021869310005326},
}

\bib{ANDRUSKIEWITSCH2003177}{article}{
      author={Andruskiewitsch, Nicol{\'a}s},
      author={Gra{\~n}a, Mat{\'\i}as},
       title={From racks to pointed {H}opf algebras},
        date={2003},
        ISSN={0001-8708},
     journal={Advances in Mathematics},
      volume={178},
      number={2},
       pages={177\ndash 243},
  url={https://www.sciencedirect.com/science/article/pii/S0001870802000713},
}

\bib{andruskiewitsch2002pointed}{inproceedings}{
      author={Andruskiewitsch, Nicol{\'a}s},
      author={Schneider, Hans-J{\"u}rgen},
       title={Pointed {H}opf algebras},
organization={Cambridge University Press},
        date={2002},
   booktitle={New directions in {H}opf algebras},
       pages={1\ndash 68},
}

\bib{CARNOVALE}{article}{
    author={Carnovale, Giovanna},
    author={Garc\'ia~Iglesias, Agust\'in},
    year = {2016},
    pages = {193-218},
    title = {$\theta$-semisimple classes of type D in $\operatorname{PSL}_n(q)$},
    volume = {26},
    journal = {Journal of Lie theory}
}

\bib{fantino2013twisted}{article}{
      author={Fantino, Fernando},
      author={Vendramin, Leandro},
       title={On twisted conjugacy classes of type {D} in sporadic simple
  groups},
        date={2013},
     journal={Contemp. Math},
      volume={585},
       pages={247\ndash 259},
}

\bib{milinski2000pointed}{article}{
      author={Milinski, Alexander},
      author={Schneider, Hans-J{\"u}rgen},
       title={Pointed indecomposable {H}opf algebras over {C}oxeter groups},
        date={2000},
     journal={Contemporary Mathematics},
      volume={267},
       pages={215\ndash 236},
}

\bib{suzuki1982group}{book}{
      author={Suzuki, Michio},
       title={Group theory},
   publisher={Springer-Verlag},
        date={1982},
      number={1},
        ISBN={9783540109150},
}

\end{biblist}
\end{bibdiv}

\end{document}